\documentclass{amsart}

\newtheorem{theorem}[equation]{Theorem}
\newtheorem{lemma}[equation]{Lemma}
\newtheorem{corollary}[equation]{Corollary}
\newtheorem{proposition}[equation]{Proposition}

\numberwithin{equation}{section}

\usepackage{amscd,amsmath,amssymb,verbatim}

\setlength{\textheight}{43pc}
\setlength{\textwidth}{28pc}

\begin{document}

\title{On the $p$-integrality of $A$-hypergeometric series} 
\author{Alan Adolphson}
\address{Department of Mathematics\\
Oklahoma State University\\
Stillwater, Oklahoma 74078}
\email{adolphs@math.okstate.edu}
\author{Steven Sperber}
\address{School of Mathematics\\
University of Minnesota\\
Minneapolis, Minnesota 55455}
\email{sperber@math.umn.edu}
\date{\today}
\keywords{}
\subjclass{Primary 33C70;  Secondary 11T23}
\begin{abstract}
Let $A$ be a set of $N$ vectors in ${\mathbb Z}^n$ and let $v$ be a vector in~${\mathbb C}^N$ 
that has minimal negative support for $A$.  Such a vector $v$ gives rise to a formal series 
solution of the $A$-hypergeometric system with parameter $\beta=Av$.  If $v$ lies 
in~${\mathbb Q}^n$, then this series has rational coefficients.   Let $p$ be a prime number.  
We characterize those $v$ whose coordinates are rational, $p$-integral, and lie in the closed 
interval $[-1,0]$ for which the corresponding normalized series solution has $p$-integral 
coefficients.  
\end{abstract}
\maketitle

\section{Introduction}

The arithmetic nature of coefficients of series solutions of differential equations is basic in a number of areas including transcendence theory and arithmetic geometry.  In the case of classical hypergeometric functions, Dwork in \cite{D} and \cite{D2} gave conditions for the $p$-integrality of the coefficients (and hence the $p$-adic boundedness of the series) as well as conditions for continuation of certain ratios of $p$-adic analytic functions.  In the present work we consider the important examples of $A$-hypergeometric functions with rational character values and determine conditions for $p$-integrality of the coefficients.  This leads naturally to conditions for rational integrality of the coefficients as well.  In particular we extend some previous work to the multivariable case.  

Questions of integrality of these coefficients arise in arithmetic mirror symmetry and in arithmetic algebraic geometry.  In a subsequent work we produce unit root formulas (and others) for families of projective hypersurfaces, where such integrality conditions prove useful.

Let $A=\{ {\bf a}_1,\dots,{\bf a}_N \}\subseteq{\mathbb Z}^n$ and let $L\subseteq{\mathbb Z}^N$ 
be the lattice of relations on~$A$:
\[ L = \bigg\{l=(l_1,\dots,l_N)\in{\mathbb Z}^N\;\bigg|\; \sum_{i=1}^N l_i{\bf a}_i = {\bf 0}
\bigg\}. \]
Let $\beta = (\beta_1,\dots,\beta_n)\in{\mathbb C}^n$.  The {\it $A$-hypergeometric system with 
parameter $\beta$\/} is the system of partial differential operators in $\lambda_1,\dots,
\lambda_N$ consisting of the {\it box operators\/}
\begin{equation}
\Box_l = \prod_{l_i>0} \bigg( \frac{\partial}{\partial \lambda_i}\bigg)^{l_i} - \prod_{l_i<0} \bigg( 
\frac{\partial}{\partial \lambda_i}\bigg)^{-l_i} \quad\text{for $l\in L$}
\end{equation}
and the {\it Euler\/} or {\it homogeneity\/} operators
\begin{equation}
Z_i = \sum_{j=1}^N a_{ij}\lambda_j\frac{\partial}{\partial\lambda_j} -\beta_i\quad\text{for $i=1,
\dots,n$},
\end{equation}
where ${\bf a}_j = (a_{1j},\dots,a_{nj})$.  If there is a linear form $h$ on ${\mathbb R}^n$ such 
that $h({\bf a}_i)=1$ for $i=1,\dots,N$, we call this system {\it nonconfluent}; otherwise, we 
call it {\it confluent}.

Let $v=(v_1,\dots,v_N)\in{\mathbb C}^N$.  The {\it negative support\/} of $v$ is the set
\[ {\rm nsupp}(v) = \{i\in\{1,\dots,N\} | \text{ $v_i$ is a negative integer}\}.  \]
Let
\[ L_v = \{l\in L\mid {\rm nsupp}(v+l) = {\rm nsupp}(v)\} \]
and put
\begin{equation}
\phi_v(\lambda) = \sum_{l\in L_v} \frac{[v]_{l_-}}{[v+l]_{l_+}} \lambda^{v+l},
\end{equation}
where
\[ [v]_{l_-} = \prod_{l_i<0} \prod_{j=1}^{-l_i} (v_i-j+1) \]
and
\[ [v+l]_{l_+} = \prod_{l_i>0}\prod_{j=1}^{l_i} (v_i+j). \]
The vector $v$ is said to have {\it minimal negative support\/} if there is no $l\in L$ for 
which ${\rm nsupp}(v+l)$ is a proper subset of ${\rm nsupp}(v)$.  The series $\phi_v(\lambda)$ 
is a formal solution of the system (1.1), (1.2) for $\beta=\sum_{i=1}^N v_i{\bf a}_i$ if and only 
if $v$ has minimal negative support (see Saito-Sturmfels-Takayama\cite[Proposition~3.4.13]{SST}).

Let $p$ be a prime number.  In \cite{D2,D1}, Dwork introduced the idea of normalizing 
hypergeometric series to have $p$-adic radius of convergence equal to~$1$.  This involves 
simply replacing each variable $\lambda_i$ by $\pi\lambda_i$, where $\pi$ is any solution of 
$\pi^{p-1} = -p$.  We define the $p$-adically normalized hypergeometric series to be
\begin{equation}
\Phi_v(\lambda) = \sum_{l\in L_v} \frac{[v]_{l_-}}{[v+l]_{l_+}} \pi^{\sum_{i=1}^N l_i}\lambda^{v+l} 
\quad\big(=\pi^{-v}\phi_v(\pi\lambda)\big).
\end{equation}
Note that for nonconfluent $A$-hypergeometric systems one has $\sum_{i=1}^N l_i = 0$, so in that 
case we have $\Phi_v(\lambda) = \phi_v(\lambda)$, i.e., the normalized series is just the usual 
one.  In this paper we study the $p$-integrality of the coefficients of the $\lambda^{v+l}$ in 
$\Phi_v(\lambda)$.

Let ${\mathbb N}$ denote the set of nonnegative integers and ${\mathbb N}_+$ the set of positive integers.  Every $t\in{\mathbb N}$ has a $p$-adic expansion
\[ t=t_0+t_1p+\cdots+t_{b-1}p^{b-1}, \quad\text{$0\leq t_j\leq p-1$ for all $j$.} \]
We define the {\it $p$-weight\/} of $t$ to be $w_p(t) = \sum_{j=0}^{b-1} t_j$.  For later use, it will be convenient to define $t!! = \prod_{j=0}^{b-1} t_j!$.  
These definitions are extended to vectors of nonnegative integers componentwise:
if $s=(s_1,\dots,s_N)\in{\mathbb N}^N$, define $w_p(s) = \sum_{i=1}^N w_p(s_i)$ and $s!! = \prod_{i=1}^N s_i!!$.

Let $R$ be the set of all $p$-integral rational vectors $(r_1,\dots,r_N)\in({\mathbb Q}\cap{
\mathbb Z}_p)^N$ satisfying $-1\leq r_i\leq 0$ for $i=1,\dots,N$.  For $r\in R$ choose a power 
$p^a$ such that $(1-p^a)r\in{\mathbb N}^N$ and set $s=(1-p^a)r$.  We define a {\it 
weight function\/} $w$ on $R$ by setting $w(r) = w_p(s)/a$.  The positive integer $a$ is not 
uniquely determined by~$r$ but the ratio $w_p(s)/a$ is independent of the choice of $a$ and 
depends only on $r$.  Note that since $0\leq (1-p^a)r_i\leq p^a-1$ for all $i$ we have $0\leq 
w_p(s)\leq aN(p-1)$ and $0\leq w(r)\leq N(p-1)$.  

We consider $A$-hypergeometric systems (1.1), (1.2) for those $\beta$ for which the set
\[ R_\beta = \bigg\{r=(r_1,\dots,r_N)\in R\;\bigg|\; \sum_{i=1}^N r_i{\bf a}_i = \beta\bigg\} \]
is nonempty.  Define $w(R_\beta) = \inf\{w(r)\mid r\in R_\beta\}$.  Trivially, if $v\in R_\beta$ 
then $w(v)\geq w(R_\beta)$.
Our first main result is the following statement.
\begin{theorem}
If $v\in R_\beta$, then the series $\Phi_v(\lambda)$ has $p$-integral coefficients if and only if
$w(v)=w(R_\beta)$.  If $w(v)>w(R_\beta)$, then the coefficients of $\Phi_v(\lambda)$ are 
$p$-adically unbounded.
\end{theorem}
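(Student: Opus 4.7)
The strategy is to compute the $p$-adic valuation of the general coefficient
\[
c_l := \frac{[v]_{l_-}}{[v+l]_{l_+}}\pi^{\sum_i l_i} \qquad (l\in L_v)
\]
of $\lambda^{v+l}$ in $\Phi_v(\lambda)$ and express it in terms of the weight function $w$ restricted to $R_\beta$, so that both directions of the theorem can be read off from the resulting identity. Fix $a\geq 1$ with $s:=(1-p^a)v\in(\mathbb{Z}_{\geq 0})^N$, so $v_i\equiv s_i\pmod{p^a\mathbb{Z}_p}$. For each $l\in L_v$, reduce the coordinates of $v+l$ modulo $1$ into $[-1,0]$ to produce $v^{(l)}\in R_\beta$ and an integer shift $m(l):=v+l-v^{(l)}\in L\cap\mathbb{Z}^N$; set $s^{(l)}:=(1-p^a)v^{(l)}$, so that $w(v^{(l)})=w_p(s^{(l)})/a$.

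The crux is a valuation lemma expressing $\mathrm{ord}_p(c_l)$ as a difference of $p$-digit sums of explicit integer vectors. I would expand each Pochhammer factor as $v_i+j=(s_i+j-jp^a)/(1-p^a)$, and similarly $v_i-j+1$, then apply Legendre's formula $\mathrm{ord}_p(n!)=(n-w_p(n))/(p-1)$ to the resulting integer numerators, telescoping across the Pochhammer products and absorbing the contribution $\mathrm{ord}_p(\pi^{\sum l_i})=\sum_i l_i/(p-1)$. The expected output is an identity relating $\mathrm{ord}_p(c_l)$ to $w_p(s^{(l)})-w_p(s)$, together with a correction built from $w_p$ of auxiliary vectors depending on $l$ and $m(l)$. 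Granting such a lemma, the integrality direction is immediate: if $w(v)=w(R_\beta)$ then $w(v^{(l)})\geq w(v)$ for every $l\in L_v$, hence $w_p(s^{(l)})\geq w_p(s)$, and the lemma yields $\mathrm{ord}_p(c_l)\geq 0$. For the converse, pick $v'\in R_\beta$ with $w(v')<w(v)$ and construct a sequence $\{l^{(k)}\}\subset L_v$ (using that $v-v'\in L\otimes\mathbb{Q}$ and refining the $p$-adic approximation by replacing $a$ with $ak$) for which $v^{(l^{(k)})}$ stabilizes at $v'$ modulo $\mathbb{Z}^N$ and the positive weight gap $w(v)-w(v')$ compounds across levels, driving $\mathrm{ord}_p(c_{l^{(k)}})\to -\infty$.

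The principal obstacle is establishing the valuation lemma. The bookkeeping must (i) separate boundary indices $v_i\in\{0,-1\}$, which interact with the $L_v$-restrictions on the signs of $l_i$, from generic indices $v_i\in(-1,0)$; (ii) track $p$-adic carries when $s_i+j$ or $s_i+1-j$ crosses multiples of $p^a$, so that digit sums combine correctly; and (iii) realize the cancellation of $\pi^{\sum l_i}$ against the numerator and denominator digit-sum contributions, which relies on subtle identities among the $w_p$-values. A reasonable route is to verify the lemma first in the special case $v+l\in R$ (where $m(l)=0$ and $v^{(l)}=v+l$, so that both sides of the formula reduce to comparisons of digit sums of known integer vectors), then deduce the general case by induction on the coordinates of $m(l)$, possibly via a shift identity relating $\Phi_v$ to $\Phi_{v+m}$ for integer $m\in L\cap\mathbb{Z}^N$.
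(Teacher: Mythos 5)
There is a genuine gap, and it sits at the very center of your plan: the correspondence you set up between exponents $l\in L_v$ and points of $R_\beta$ is the wrong one. You define $v^{(l)}$ by reducing the coordinates of $v+l$ modulo $1$ into $[-1,0]$. But $l$ has integer coordinates, so $v+l$ has the same fractional parts as $v$; hence $v^{(l)}=v$ at every coordinate with $v_i\in(-1,0)$, and only the boundary coordinates $v_i\in\{0,-1\}$ admit any ambiguity. Consequently $s^{(l)}=s$ (up to boundary choices), the proposed main term $w_p(s^{(l)})-w_p(s)$ vanishes identically, and the entire content of the theorem is pushed into the unspecified ``correction built from $w_p$ of auxiliary vectors,'' which you never construct. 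Your integrality argument (``$w(v^{(l)})\geq w(v)$, hence $\mathrm{ord}_p(c_l)\geq 0$'') then reduces to the tautology $w(v)\geq w(v)$ and proves nothing; the converse suffers the same defect. The map $R_\beta\ni r\mapsto v^{(l)}$ simply does not see how $w$ varies over $R_\beta$, which is what the theorem is about.

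The correct correspondence, which makes the valuation computation exact with no correction term, is a small \emph{rational} perturbation rather than an integer shift: for $b$ large enough (depending on $l$) one has $r:=v+(1-p^{ab})^{-1}l\in R_\beta$, and this map is a bijection between $\{l\in L_v: v+(1-p^{ab})^{-1}l\in R_\beta\}$ and the finite set $R_{\beta,b}$ of elements of $R_\beta$ with denominator dividing $p^{ab}-1$; every $l\in L_v$ is captured for some $b$. Writing $(1-p^{ab})v=u(1+p^a+\cdots+p^{a(b-1)})$ with $u=(1-p^a)v$, the quantity $(1-p^{ab})r=(1-p^{ab})v+l$ is a vector of integers in $[0,p^{ab}-1]$, and Legendre's formula applied coordinatewise to the Pochhammer quotients, together with $\mathrm{ord}_p\,\pi^{\sum l_i}=\sum_i l_i/(p-1)$, telescopes to the clean identity
\[ \mathrm{ord}_p(c_l)=\tfrac{1}{p-1}\bigl(w_p((1-p^{ab})v+l)-w_p((1-p^{ab})v)\bigr)=\tfrac{ab}{p-1}\bigl(w(r)-w(v)\bigr). \]
Both directions of the theorem, and the unboundedness statement (take $r$ with $w(r)<w(v)$ and let $b$ run through multiples, so the right-hand side tends to $-\infty$), read off immediately. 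Your instinct about compounding the weight gap by refining the approximation level is exactly right for the unboundedness part, but without replacing your mod-$\mathbb{Z}$ reduction by the $(1-p^{ab})^{-1}l$ perturbation, the valuation lemma you need cannot be established, and the carry/bookkeeping difficulties you flag in (i)--(iii) are symptoms of having chosen the wrong normal form rather than obstacles that more careful accounting would overcome.
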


Note that we do not assume in Theorem 1.5 that $v$ has minimal negative support, so the series 
$\Phi_v(\lambda)$ is not necessarily a solution of the system (1.1),~(1.2).

Direct application of Theorem 1.5 is limited by the fact that we do not know a general procedure for computing $w(R_\beta)$ or for determining whether there exists $v\in R_\beta$ such that $w(v) = w(R_\beta)$.  Our second main result (Theorem 3.5 below) gives a lower bound for $w(R_\beta)$.  In many cases of interest, one can find $v\in R_\beta$ for which $w(v)$ equals this lower bound.  For such $v$ we have $w(v)=w(R_\beta)$, hence $\Phi_v(\lambda)$ has $p$-integral coefficients.  As an application, in Section~4 we prove 
Proposition~4.7, a $p$-integrality criterion for ${}_rF_{s-1}$-hypergeometric series ($s\geq r$) 
with parameters in the interval $(0,1]$.  In many cases, including the nonconfluent case, 
Proposition~4.7 improves the $p$-integrality criterion of Dwork\cite[Lemma~2.2]{D2}.  In 
Section 5, we restrict to the nonconfluent case and find conditions that guarantee the series 
$\Phi_v(\lambda)$ has integer coefficients.  This situation often occurs for families of 
complete intersections.

The problem of computing $w(R_\beta)$ becomes more 
tractable when one considers the set of parameters $\beta+{\mathbb Z}^n$ simultaneously.  
Applying ideas of R. Blache\cite[Section~1.1]{B}, we show in Section~6 that there exist 
$\beta_0\in\beta+{\mathbb Z}^n$ and $v\in R_{\beta_0}$ such that
\[ w(v) = w(R_{\beta_0}) = \min\{w(R_{\gamma})\mid \gamma\in\beta+{\mathbb Z}^n\}. \]
(The main result, Theorem 6.1, is in fact stronger than this assertion.)  
In particular, the series $\Phi_v(\lambda)$ has $p$-integral coefficients.  The proof gives an 
effective procedure for finding $\beta_0$ and $v$.  We remark that our definition of 
$w(R_\beta)$ is closely related to Blache's notion\cite{B} of $p$-density for finite subsets of~${\mathbb N}^n$.  

The ideas in this article arose in the study of exponential sums over finite fields.  In Section 7 we apply the results of Section 6 to show that every exponential sum on the torus over a finite field of characteristic~$p$ gives rise to a nonempty family of $p$-integral $A$-hypergeometric series.  These series are worthy of further study as they seem to carry useful information about the exponential sum.  One such example is studied in \cite{AS1}.

We regard the results of this paper as foundational in nature.  In \cite{D2}, Dwork proved not only $p$-integrality results but showed under additional hypotheses that certain ratios of hypergeometric series have $p$-adic analytic continuation and that the series $q(t):=\exp\big(G(t)/F(t)\big)$ is $p$-integral for log solutions $F(t)\log t+ G(t)$ of certain hypergeometric equations.  As is well-known, this series $q(t)$ appears in works on mirror symmetry in connection with the mirror map.  One of our goals is to prove such results for $A$-hypergeometric series and to relate these results to properties of the corresponding exponential sums.  A related goal is to prove $p$-adic analytic formulas for unit roots.  An instance of this was given in \cite{AS1}, where we study the $L$-function of a general family of toric exponential sums, prove $p$-adic continuation for a ratio of related $p$-integral $A$-hypergeometric series, and identify the unique unit root of this $L$-function as a special value of this continued function.  In a recent as yet unpublished work, we consider families of projective Calabi-Yau hypersurfaces and give a $p$-adic analytic formula for the unique unit root of the middle-dimensional factor of the zeta function in similar such terms.  An early version of this result appears in \cite{AS2}.  In these cases, the required $p$-integrality of the relevant $A$-hypergeometric series is more obvious.

\section{Proof of Theorem 1.5}

We begin with some elementary results.  Let $t\in{\mathbb N}$ and define 
\[ \alpha_p(t) = \frac{t-w_p(t)}{p-1}\in{\mathbb N}. \]
One has the easily checked congruence
\[ t!\equiv (-p)^{\alpha_p(t)}t!!\pmod{p^{\alpha_p(t) + 1}}. \]
For our purposes, this is more conveniently written in the form
\begin{equation} (-p)^{-\alpha_p(t)}t!\equiv t!!\pmod{p}, 
\end{equation}
where both sides of the congruence are $p$-adic units.

Now let $k\in{\mathbb N}$ with $k\leq t$.  Since
\[ t(t-1)\cdots (t-k+1) = \frac{t!}{(t-k)!}, \]
Eq.~(2.1) gives us a congruence for this product.  If we set
\[ \beta_p(t,k) = \alpha_p(t) - \alpha_p(t-k) = \frac{k-w_p(t) + w_p(t-k)}{p-1}\in{\mathbb N}, \]
then
\begin{equation}
(-p)^{-\beta_p(t,k)}\prod_{i=0}^{k-1} (t-i) \equiv \frac{t!!}{(t-k)!!}\pmod{p} 
\end{equation}
and both sides of this congruence are $p$-adic units.

We extend the congruence (2.2) to $t\in{\mathbb Z}_p$.  Write
\[ t = \sum_{i=0}^\infty t_ip^i,\quad \text{$0\leq t_i\leq p-1$ for all $i$,} \]
and set $t^{(b)} = \sum_{i=0}^{b-1} t_ip^i\in{\mathbb Z}_{\geq 0}$.  Then $t\equiv t^{(b)}\pmod{p^b}$.  

\begin{lemma}
Let $k\in{\mathbb N}$ with $k\leq t^{(b)}$.  One has
\[ \prod_{i=0}^{k-1} (t-i) \equiv \prod_{i=0}^{k-1} (t^{(b)}-i)\pmod{p^{\beta_p(t^{(b)},k)+1}}. \]
\end{lemma}

\begin{proof}
Write $t=t^{(b)}+p^b\epsilon$, where $\epsilon\in{\mathbb Z}_p$.  We have
\[ \prod_{i=0}^{k-1} (t-i) =\prod_{i=0}^{k-1} (t^{(b)}-i) + \sum_{j=1}^k \sum_{0\leq i_1<\dots<i_j\leq k-1} M(i_1,\dots,i_j), \]
where
\[ M(i_1,\dots,i_j) = t^{(b)}(t^{(b)}-1)\cdots \widehat{(t^{(b)}-i_1)}\cdots \widehat{(t^{(b)}-i_j)}\cdots (t^{(b)}-k+1) (p^b\epsilon)^j. \]
We have $0<t^{(b)}-i_j<\dots<t^{(b)}-i_1\leq p^b-1$, so each of these positive integers has $p$-ordinal $<b$.  This implies that
\[ {\rm ord}_p\:M(i_1,\dots,i_j)> {\rm ord}_p\:\prod_{i=0}^{k-1} (t^{(b)}-i) = \beta_p(t^{(b)},k), \]
where the last equality follows from (2.2).  
\end{proof}

It follows from (2.2) and Lemma 2.3 that
\begin{equation}
(-p)^{-\beta_p(t^{(b)},k)}\prod_{i=0}^{k-1} (t-i) \equiv \frac{t^{(b)}!!}{(t^{(b)}-k)!!} \pmod{p}
\end{equation}
and that both sides of this congruence are $p$-adic units.  We shall use (2.4) to determine the $p$-divisibility of the coefficients of the series~(1.4).  For $v=(v_1,\dots,v_N)\in{\mathbb Z}_p^N$, we write the $p$-adic expansions of the $v_i$ as
\[ v_i = \sum_{j=0}^\infty v_{ij}p^j,\quad\text{$0\leq v_{ij}\leq p-1$ for all $j$,} \]
and we set
\[ v_i^{(b)} = \sum_{j=0}^{b-1} v_{ij}p^j. \]
We put $v^{(b)} = (v_1^{(b)},\dots,v_N^{(b)})\in \{0,1,\dots,p^b-1\}^N$.  

\begin{proposition}
Let $v\in{\mathbb Z}_p^N$ and $l\in{\mathbb Z}^N$ with ${\rm nsupp}(v+l) = {\rm nsupp}(v)$.  For all sufficiently large positive integers $b$, one has 
\begin{equation}
0\leq v_i^{(b)} + l_i\leq p^b-1\quad\text{for $i=1,\dots,N$.}
\end{equation}
When $(2.6)$ holds, we have
\begin{equation}
\pi^{w_p(v^{(b)})-w_p((v+l)^{(b)})}\frac{\pi^{\sum_{i=1}^N l_i}[v]_{l_-}}{[v+l]_{l_+}} \equiv 
 \frac{v^{(b)}!!}{(v+l)^{(b)}!!}\pmod{p}.
\end{equation}
\end{proposition}

Note that (2.6) implies that $(v+l)^{(b)} = v^{(b)}+l$.

Since both sides of the congruence (2.7) are $p$-adic units, we get the following corollary.
\begin{corollary}
With the hypotheses  and notation of Proposition $2.5$, if $(2.6)$ holds for a positive integer $b$, then
\begin{equation}
{\rm ord}_p\: \frac{\pi^{\sum_{i=1}^N l_i}[v]_{l_-}}{[v+l]_{l_+}} = \frac{1}{p-1} \big(w_p((v+l)^{(b)})-w_p(v^{(b)})\big).
\end{equation}
\end{corollary}

\begin{proof}[Proof of Proposition $2.5$]
The congruence (2.7) follows directly from (2.6), (2.4), and the definitions of $[v]_{l_-}$, $[v+l]_{l_+}$, and $\beta_p(v_i^{(b)},l_i)$, so it remains to prove only~(2.6).  There are three cases to consider.

Suppose first that $v_{ij}=0$ for all sufficiently large $j$.  Then $v_i$ is a nonnegative integer and for all large $b$ we have $v_i = v_i^{(b)}$.  If $l_i<0$, then $v_i +l_i\geq 0$ since ${\rm nsupp}(v+l) = {\rm nsupp}(v)$.  If $l_i>0$, we can guarantee that $v_i+l_i\leq p^b-1$ simply by choosing $b$ to be sufficiently large.

Suppose next that $v_{ij} = p-1$ for all sufficiently large $j$.  In this case we have for all sufficiently large $b$
\[ v_i = \sum_{j=0}^{b-1} v_{ij}p^j + p^b\sum_{j=0}^\infty (p-1)p^j = \sum_{j=0}^{b-1} v_{ij}p^j -p^b, \]
i.e., $v_i$ is a negative integer.  If $l_i<0$, we will have $v_i^{(b)}+l_i\geq 0$ for $b$ sufficiently large: since $v_{ij}=p-1$ for all sufficiently large $j$, we can make $v_i^{(b)}$ arbitrarily large by choosing $b$ large.  If $l_i>0$, then $v_i+l_i<0$ since ${\rm nsupp}(v+l) = {\rm nsupp}(v)$.  But $v_i+l_i = v_i^{(b)}-p^b+l_i$, so $v_i^{(b)}+l_i<p^b$.  

Finally, suppose that there are infinitely many $j$ for which $0<v_{ij}<p-1$.  Suppose that $l_i<0$.  Since $v_{ij}>0$ for infinitely many $j$, we can make $v_i^{(b)}$ arbitrarily large by choosing $b$ large and thus guarantee that $v_i^{(b)}+l_l\geq 0$.  And since $v_{ij}<p-1$ for infinitely many $j$, there are infinitely many $b$ such that
\[ v_i^{(b)}\leq \sum_{j=0}^{b-2}(p-1)p^j + (p-2)p^{b-1} = p^b-1-p^{b-1}. \]
By choosing such a $b$ large enough we can guarantee that $v_i^{(b)}+l_i\leq p^b-1$ when $l_i>0$.  But when this inequality holds for one $b$, it automatically holds for all larger $b$ as well. 
\end{proof}

When $v\in R$, one has more control over the right-hand side of (2.9).  If $b\in{\mathbb N}$ is chosen so that $(1-p^b)v\in{\mathbb N}^N$, then $(1-p^b)v=v^{(b)}$, so
\[ w_p(v^{(b)}) = b w(v). \]
If $b$ is sufficiently large and $l\in{\mathbb Z}^N$ satisfies ${\rm nsupp}(v+l) = {\rm nsupp}(v)$, then (2.6) implies that $v+(1-p^b)^{-1}l\in R$ as well, so $(1-p^b)v + l = v^{(b)}+l = (v+l)^{(b)}$ and
\[ w_p((v+l)^{(b)}) = bw(v+(1-p^b)^{-1}l). \]
For $v\in R$, Corollary 2.8 becomes the following.
\begin{corollary}
Let $v\in R$ and let $l\in{\mathbb Z}^N$ with ${\rm nsupp}(v+l) = {\rm nsupp}(v)$.  Then for all sufficiently large $b\in{\mathbb N}_+$ satisfying $(1-p^b)v\in{\mathbb N}^N$ we have $v+(1-p^b)^{-1}l\in R$.  In this case 
\begin{equation}
{\rm ord}_p\: \frac{\pi^{\sum_{i=1}^N l_i}[v]_{l_-}}{[v+l]_{l_+}} = \frac{b}{p-1} \big(w(v+(1-p^b)^{-1}l)-w(v)\big).
\end{equation}
\end{corollary}

To complete the proof of Theorem 1.5 we need a lemma.
\begin{lemma}
Let $v\in R_\beta$.  Then $r\in R_\beta$ if and only if $r=v+(1-p^b)^{-1}l$ for some $b\in{\mathbb N}_+$ satisfying $(1-p^b)v\in{\mathbb N}^N$ and some $l\in L_v$.
\end{lemma}

\begin{proof}
If $l\in L_v$, then Corollary 2.10 implies that $v+(1-p^b)^{-1}l\in R_\beta$ for sufficiently large $b$ satisfying $(1-p^b)v\in{\mathbb N}^N$.  Conversely, let $r\in R_\beta$ and let $b\in{\mathbb N}_+$ be chosen so that $(1-p^b)v,(1-p^b)r\in{\mathbb N}^N$.  Put $l=(1-p^b)(r-v)$, so that $r=v+(1-p^b)^{-1}l$.  We claim that $l\in L_v$.  Since $-1\leq v_i\leq 0$ for all $i$, to show that ${\rm nsupp}(v+l) = {\rm nsupp}(v)$ we need consider only the cases $v_i=-1$ and $v_i=0$.  Suppose that $v_i=-1$ for some $i$.  Then $0\leq r_i-v_i\leq 1$ so $1-p^b\leq l_i\leq 0$ and $v_i+l_i$ is a negative integer.  If $v_i=0$ for some $i$, then $-1\leq r_i-v_i\leq 0$, so $0\leq l_i\leq p^b-1$ and $v_i+l_i\in{\mathbb N}$.
\end{proof}

\begin{proof}[Proof of Theorem $1.5$]
Fix $v\in R_\beta$.  By Lemma 2.12
\begin{multline*}
 R_\beta = \{v+(1-p^b)^{-1}l\mid \\ 
\text{$b\in{\mathbb N}_+$, $(1-p^b)v\in{\mathbb N}^N$, $l\in L_v$, and $v+(1-p^b)^{-1}l\in R$}\}. 
\end{multline*}
The first sentence of Theorem~1.5 then follows from Corollary~2.10.  If $w(v)>w(R_\beta)$, then there exists $r\in R_\beta$ with $w(r)<w(v)$.  Choose $b\in{\mathbb N}_+$ such that $(1-p^b)v,(1-p^b)r\in{\mathbb N}^N$.  For each positive integer $c$, define $l^{(c)} = (1-p^{bc})(r-v)$.  The proof of Lemma~2.12 shows that $l^{(c)}\in L_v$.  Corollary~2.10 gives
\begin{align*}
{\rm ord}_p\: \frac{\pi^{\sum_{i=1}^N l^{(c)}_i}[v]_{l^{(c)}_-}}{[v+l^{(c)}]_{l^{(c)}_+}} &= \frac{bc}{p-1} \big(w(v+(1-p^{bc})^{-1}l^{(c)})-w(v)\big) \\
 &=\frac{bc}{p-1}\big(w(r)-w(v)\big).
\end{align*}
Since $w(r)-w(v)<0$ and $c$ is an arbitrary positive integer, this implies the second sentence of Theorem~1.5.
\end{proof}

We collect here a few results on truncations of the series $\Phi_v(\lambda)$ that will be used in Section 7.
For $b\in{\mathbb N}_+$, define
\[ R_{\beta,b} = \{r\in R_\beta \mid (1-p^b)r\in{\mathbb N}^N\}, \]
a finite set.  Put $w(R_{\beta,b}) = \min\{w(r)\mid r\in R_{\beta,b}\}$.  
For $v\in R_{\beta,b}$, let
\[ L_{v,b} = \{l\in L_v\mid v+(1-p^b)^{-1}l\in R_{\beta,b}\}. \]
The proof of Lemma 2.12 shows that the maps $r\mapsto (1-p^b)(r-v)$, $l\mapsto v+(1-p^b)^{-1}l$ define a one-to-one correspondence between $R_{\beta,b}$ and $L_{v,b}$.  Define the truncated series $\Phi_{v,b}(\lambda)$ by
\[ \Phi_{v,b}(\lambda) = \sum_{l\in L_{v,b}} \frac{[v]_{l_-}}{[v+l]_{l_+}} \pi^{\sum_{i=1}^N l_i}\lambda^{v+l}. \]

The argument that proved Theorem 1.5 also establishes the following result.
\begin{proposition}
Let $v\in R_{\beta,b}$.  Then $\Phi_{v,b}(\lambda)$ has p-integral coefficients if and only if $w(v) = w(R_{\beta,b})$.
\end{proposition}

If $v\in R_{\beta,b}$ satisfies $w(v) = w(R_{\beta,b})$, then (2.7) implies
\begin{equation}
\Phi_{v,b}(\lambda) \equiv \sum_{l\in L_{v,b}} \pi^{w_p((v+l)^{(b)})-w_p(v^{(b)})}\frac{\big((1-p^b)v\big)!!}{\big((1-p^b)v+l\big)!!}\lambda^{v+l}\pmod{p}.
\end{equation}
From this congruence it follows that
\begin{multline}
\frac{\pi^{w_p(v^{(b)})}\lambda^{-p^bv}}{\big((1-p^b)v\big)!!} \Phi_{v,b}(\lambda) \equiv \\ 
\sum_{l\in L_{v,b}} \pi^{w_p((v+l)^{(b)})}\frac{\lambda^{(1-p^b)v+l}}{\big( (1-p^b)v+l\big)!!} \pmod{p\pi^{w_p(v^{(b)})}}.
\end{multline}
Since the map $l\mapsto v+(1-p^b)^{-1}l$ is a bijection between $L_{v,b}$ and $R_{\beta,b}$ and since $(v+l)^{(b)} = (1-p^b)v+l$, we can rewrite the right-hand side of (2.15):
\begin{multline}
\frac{\pi^{w_p(v^{(b)})}\lambda^{-p^bv}}{\big((1-p^b)v\big)!!} \Phi_{v,b}(\lambda) \equiv \\
\sum_{r\in R_{\beta,b}} \pi^{w_p((1-p^b)r)} \frac{\lambda^{(1-p^b)r}}{\big( (1-p^b)r\big)!!} \pmod{p\pi^{bw(R_{\beta,b})}},
\end{multline}
an expression which is independent of the choice of $v$.

We derive from (2.16) a congruence that will be used in Section 7.  If $r\in R_{\beta,b}$ satisfies $w(r)>w(R_{\beta,b})$, then $w_p\big((1-p^b)r\big)>bw(R_{\beta,b})$, so
\begin{equation}
\frac{\lambda^{-p^bv}}{\big((1-p^b)v\big)!!} \Phi_{v,b}(\lambda) \equiv \sum_{\substack{r\in R_{\beta,b}\\ w(r) = w(R_{\beta,b})}}  \frac{\lambda^{(1-p^b)r}}{\big( (1-p^b)r\big)!!} \pmod{\pi}.
\end{equation}

\section{Lower bound for $w(R_\beta)$}

In this section we give a lower bound for $w(R_\beta)$, which leads to a useful condition for 
the $p$-integrality of $\Phi_v(\lambda)$.

Let $\Delta$ be the convex hull of the set $A\cup\{{\bf 0}\}$ and let $C(\Delta)$ be the real 
cone generated by $\Delta$.  For $\gamma\in C(\Delta)$, let $w_\Delta(\gamma)$ be the smallest 
nonnegative real number $w$ such that $w\Delta$ (the dilation of $\Delta$ by the factor $w$) 
contains $\gamma$.  It is easily seen that
\begin{equation}
w_\Delta(\gamma) = \min\bigg\{\sum_{i=1}^N t_i\:\bigg|\: (t_1,\dots,t_N)\in
({\mathbb R}_{\geq 0})^N \text{ and } \sum_{i=1}^N t_i{\bf a}_i =\gamma \bigg\}. 
\end{equation}
If $\gamma\in{\mathbb Q}^n\cap C(\Delta)$, then we may replace $({\mathbb R}_{\geq 0})^N$ by 
$({\mathbb Q}_{\geq 0})^N$ in (3.1).  For any subset $X\subseteq 
C(\Delta)$, define $w_\Delta(X) = \inf\{w(\gamma)\mid\gamma\in X\}$.

For a nonnegative integer $t$, written $t=t_0+t_1p+\cdots+t_{a-1}p^{a-1}$ with $0\leq t_i\leq p-1$ 
for $i=0,\dots,a-1$, set
\[ t' = t_1+t_2p+\cdots+t_{a-1}p^{a-2}+t_0p^{a-1}. \]
We denote by $t^{(k)}$ the $k$-fold iteration of this operation.  We extend this definition to 
vectors of nonnegative integers componentwise:  if $s=(s_1,\dots,s_N)\in{\mathbb N}^N$, put $s^{(k)} = (s_1^{(k)},\dots,s_N^{(k)})$.
For $r\in R$, choose a power $p^a$ for which $(1-p^a)r\in{\mathbb N}^N$ and set 
$s=(1-p^a)r$.  We define
$r^{(k)} = (1-p^a)^{-1}s^{(k)}\in R$.  Note that the $p$-weight of $t$ satisfies the formula 
\[ w_p(t) = \frac{p-1}{p^a-1}\sum_{k=0}^{a-1} t^{(k)}. \]
It follows that $w_p(s)=\frac{p-1}{p^a-1}\sum_{k=0}^{a-1}\sum_{i=1}^N s_i^{(k)}$, hence
\begin{equation}
w(r) = -\frac{p-1}{a} \sum_{k=0}^{a-1} \sum_{i=1}^N r_i^{(k)}. 
\end{equation}

Let $\sigma_{-\beta}$ be the smallest closed face of $C(\Delta)$ that contains $-\beta$, let $T$ be the 
set of proper closed faces of $\sigma_{-\beta}$, and put
\[ \sigma^\circ_{-\beta} = \sigma_{-\beta}\setminus\bigcup_{\tau\in T} \tau. \]
By the minimality of $\sigma_{-\beta}$, $-\beta\not\in\tau$ for any $\tau\in T$, so $-\beta\in
\sigma_{-\beta}^\circ$.  
\begin{lemma}
For all $r\in R_\beta$ and all $k$, $-\sum_{i=1}^N r_i^{(k)}{\bf a}_i\in\sigma^\circ_{-\beta}$.
\end{lemma}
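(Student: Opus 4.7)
The plan is to exploit the fact that the $p$-adic digit shift preserves the support of nonnegative integers, componentwise. Writing $r^{(k)} = (1-p^a)^{-1} s^{(k)}$ with $s = (1-p^a)r \in ({\mathbb Z}_{\geq 0})^N$, the integer $s_i^{(k)}$ is obtained from $s_i$ by a cyclic permutation of its $p$-adic digits, so $s_i^{(k)}=0$ iff $s_i=0$. Since the coordinates of $r$ and $r^{(k)}$ all lie in $[-1,0]$, this translates into the equivalences $r_i^{(k)}=0 \iff r_i=0$ and $r_i^{(k)}<0 \iff r_i<0$. Thus $r$ and $r^{(k)}$ share the common index set $I^- := \{i : r_i < 0\}$.

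I would next show by a supporting-hyperplane argument that $i\in I^- \Rightarrow {\bf a}_i\in \sigma_{-\beta}$. Choose a linear form $h$ with $h\geq 0$ on $C(\Delta)$ and $\sigma_{-\beta} = C(\Delta)\cap\{h=0\}$. Then $0 = h(-\beta) = \sum_i (-r_i)h({\bf a}_i)$ is a sum of nonnegative terms, so $h({\bf a}_i)=0$ (equivalently ${\bf a}_i\in\sigma_{-\beta}$) whenever $-r_i>0$. Setting $\gamma_k := -\sum_i r_i^{(k)}{\bf a}_i = \sum_{i\in I^-}(-r_i^{(k)}){\bf a}_i$, this immediately places $\gamma_k$ in the face $\sigma_{-\beta}$.

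To upgrade from $\sigma_{-\beta}$ to the relative interior $\sigma_{-\beta}^\circ$, I would rerun the argument against each proper face. Fix $\tau\in T$ and pick a linear form $h_\tau$ with $h_\tau\geq 0$ on $\sigma_{-\beta}$ and $\{h_\tau=0\}\cap\sigma_{-\beta}=\tau$. Since $-\beta\in\sigma_{-\beta}^\circ$ lies outside $\tau$, we have $h_\tau(-\beta) = \sum_{i\in I^-}(-r_i)h_\tau({\bf a}_i)>0$, forcing some $i_0\in I^-$ with $h_\tau({\bf a}_{i_0})>0$. By the first paragraph $-r_{i_0}^{(k)}>0$, and all summands $(-r_j^{(k)})h_\tau({\bf a}_j)$ for $j\in I^-$ are nonnegative (because ${\bf a}_j\in\sigma_{-\beta}$). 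Hence $h_\tau(\gamma_k)\geq (-r_{i_0}^{(k)})h_\tau({\bf a}_{i_0})>0$, which shows $\gamma_k\notin\tau$. Since this holds for every $\tau\in T$, $\gamma_k\in\sigma_{-\beta}^\circ$.

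I do not anticipate a real obstacle; the entire argument is driven by the elementary observation that a cyclic rearrangement of $p$-adic digits cannot create or destroy zero entries. The only point requiring slight care is that the auxiliary functional $h_\tau$ in the last paragraph evaluates nonnegatively on every ${\bf a}_i$ that appears with a nonzero coefficient in $\gamma_k$, and this is precisely what the support identity $I^-(r) = I^-(r^{(k)})$ together with the containment $\{{\bf a}_i : i\in I^-\}\subseteq \sigma_{-\beta}$ delivers.
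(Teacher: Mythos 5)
Your proof is correct and follows essentially the same route as the paper's: both arguments rest on the observation that the cyclic digit shift preserves which coordinates of $r$ are zero, combined with the facts that $r_i\neq 0$ forces ${\bf a}_i\in\sigma_{-\beta}$ and that for each proper face $\tau$ some $i$ with $r_i\neq 0$ has ${\bf a}_i\notin\tau$. The only difference is that you make the supporting-hyperplane justifications explicit where the paper treats them as standard properties of faces of polyhedral cones.
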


\begin{proof}
Since $-\beta\in\sigma_{-\beta}$ and $-\sum_{i=1}^N r_i{\bf a}_i = -\beta$ we must have $r_i=0$ for 
all $i$ such that ${\bf a}_i\not\in \sigma_{-\beta}$.  But $r_i=0$ implies $r^{(k)}_i=0$, so 
$-\sum_{i=1}^N r_i^{(k)}{\bf a}_i\in\sigma_{-\beta}$.  Fix $\tau\in T$.  Since $-\beta\not\in\tau$, we have $r_i\neq 0$ for some $i$ such that 
${\bf a}_i\not\in\tau$.  But $r_i\neq 0$ implies that $r_i^{(k)}\neq 0$, so $-\sum_{i=1}^N 
r_i^{(k)}{\bf a}_i\not\in\tau$.
\end{proof}

If $r,\tilde{r}\in R_\beta$, then $-\sum_{i=1}^N r_i^{(k)}{\bf a}_i\equiv -\sum_{i=1}^N 
\tilde{r}_i^{(k)}{\bf a}_i\pmod{{\mathbb Z}^n}$.  Choose $\beta^{(k)}\in{\mathbb Q}^n$ such that 
$-\sum_{i=1}^N r_i^{(k)}{\bf a}_i\in -\beta^{(k)}+{\mathbb Z}^n$ for all $r\in R_\beta$.  
It now follows from~(3.1) and Lemma 3.3 that for all $r\in R_\beta$,
\[ -\sum_{i=1}^N r_i^{(k)}\geq w_\Delta\bigg(-\sum_{i=1}^N r_i^{(k)}{\bf a}_i\bigg)\geq w_\Delta\big(
\sigma_{-\beta}^\circ\cap(-\beta^{(k)}+{\mathbb Z}^n)\big). \]
Equation (3.2) therefore gives
\begin{equation}
w(r)\geq \frac{p-1}{a}\sum_{k=0}^{a-1} w_\Delta\big(\sigma_{-\beta}^\circ\cap(-\beta^{(k)}+{\mathbb Z}^n)\big)
\end{equation}
for all $r\in R_\beta$.  

Note that the right-hand side of (3.4) is independent of the choice of $a$: if~$e$ is the smallest positive 
integer such that $-\beta^{(e)}+{\mathbb Z}^n = -\beta+{\mathbb Z}^n$, then $e$ divides $a$ and 
one has
\[ \sum_{k=0}^{a-1} w_\Delta\big(\sigma_{-\beta}^\circ\cap(-\beta^{(k)}+{\mathbb Z}^n)\big) = \frac{a}{e} 
\sum_{k=0}^{e-1} w_\Delta\big(\sigma_{-\beta}^\circ\cap(-\beta^{(k)}+{\mathbb Z}^n)\big), \]
so (3.4) becomes
\[ w(r)\geq \frac{p-1}{e}\sum_{k=0}^{e-1} w_\Delta\big(\sigma_{-\beta}^\circ\cap(-\beta^{(k)}+{
\mathbb Z}^n)\big). \]
Since this estimate holds for all $r\in R_\beta$, we have established the following result.
\begin{theorem}
With the above notation, we have
\[ w(R_\beta)\geq \frac{p-1}{e}\sum_{k=0}^{e-1} w_\Delta\big(\sigma_{-\beta}^\circ\cap(-\beta^{(k)}+{
\mathbb Z}^n)\big). \]
\end{theorem}

The following corollary is now an immediate consequence of Theorem 1.5.
\begin{corollary}
If $v\in R_\beta$ satisfies $w(v) = \frac{p-1}{e}\sum_{k=0}^{e-1} w_\Delta\big(\sigma_{-\beta}^\circ\cap
(-\beta^{(k)}+{\mathbb Z}^n)\big)$, then the series $\Phi_v(\lambda)$ has $p$-integral coefficients.
\end{corollary}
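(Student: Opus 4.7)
The plan is to simply chain together two inequalities already established in the excerpt. The definition $w(R_\beta)=\inf\{w(r)\mid r\in R_\beta\}$ immediately gives $w(v)\geq w(R_\beta)$ whenever $v\in R_\beta$, and (3.4) supplies the reverse-direction lower bound
\[
w(R_\beta)\geq \frac{p-1}{e}\sum_{k=0}^{e-1} w_\Delta(\sigma_{-\beta}^\circ\cap(-\beta^{(k)}+{\mathbb Z}^n)).
\]
Under the hypothesis that $w(v)$ equals the right-hand side of this inequality, both inequalities must be equalities, so $w(v)=w(R_\beta)$.

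Having obtained $w(v)=w(R_\beta)$, I would then invoke the first sentence of Theorem 1.5 to conclude that $\Phi_v(\lambda)$ has $p$-integral coefficients. No further analysis of the cone $C(\Delta)$, the face $\sigma_{-\beta}$, or the orbit of $\beta$ under the operation $\beta\mapsto\beta^{(k)}$ is needed at this stage, since those geometric ingredients were already absorbed into the derivation of (3.4) via Lemma 3.2 and the definition of $w_\Delta$.

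There is no real obstacle here: the corollary is a direct packaging of Theorem 1.5 with the geometric lower bound (3.4), and the proof consists of these two lines. The only mild subtlety worth flagging explicitly is that the quantity on the right-hand side of (3.4) is intrinsic to $\beta$ (it does not depend on the auxiliary integer $a$ used to define $w$), a point already verified in the paragraph preceding (3.4) via the divisibility $e\mid a$; this ensures the hypothesis of the corollary is well-posed.
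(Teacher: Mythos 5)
Your proposal is correct and coincides with the paper's own argument: the paper states Corollary 3.5 as "an immediate consequence of Theorem 1.5," relying precisely on the chain $w(v)\geq w(R_\beta)\geq \frac{p-1}{e}\sum_{k=0}^{e-1} w_\Delta(\sigma_{-\beta}^\circ\cap(-\beta^{(k)}+{\mathbb Z}^n))$ from (3.4) collapsing to equalities under the hypothesis. Nothing further is needed.
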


{\bf Example 1:} Let $A=\{{\bf a}_1,{\bf a}_2,{\bf a}_3\}\subseteq{\mathbb Z}^2$, where 
${\bf a}_1 = (3,0)$, ${\bf a}_2 = (0,3)$, ${\bf a}_3 = (2,2)$; take $\beta = (-1,-1)$ and 
$v=(0,0,-1/2)$.  One has 
\[ L=\{(2l,2l,-3l)\mid l\in{\mathbb Z}\}, \quad L_v = \{(2l,2l,-3l)\mid l\in{\mathbb N}\}, \]
and
\[ \Phi_v(\lambda) = \lambda_3^{-1/2}\sum_{l=0}^\infty \frac{(-1)^{3l}(1/2)_{3l}}{(2l)!^2} \pi^l
\bigg(\frac{\lambda_1^2\lambda_2^2}{\lambda_3^3}\bigg)^l, \]
where we have used the Pochhammer notation: $(\alpha)_l = \alpha(\alpha+1)\cdots(\alpha+l-1)$.  
For an odd prime $p$ the vector $v$ is $p$-integral and $w(v) = (p-1)/2$.  The polytope 
$\Delta$ is the quadrilateral with vertices at $(0,0)$, $(3,0)$, $(0,3)$, and $(2,2)$, 
$C(\Delta) = ({\mathbb R}_{\geq 0})^2$, $\sigma_{-\beta}^\circ = ({\mathbb R}_{>0})^2$, and 
$-\beta+{\mathbb Z}^2 = {\mathbb Z}^2$.  We have $e=1$ and $w_\Delta\big(\sigma_{-\beta}^\circ\cap
(-\beta+{\mathbb Z}^2)\big) = 1/2$.  Corollary~3.6 implies that $\Phi_v(\lambda)$ has $p$-integral 
coefficients.

\section{Classical hypergeometric series}

In this section we deduce from Corollary 3.6 a criterion for the $p$-integrality of the ($p$-adically normalized) classical hypergeometric series
\begin{equation}
{}_rF_{s-1}\bigg(\begin{matrix} \theta_1 & \theta_2 & \dots & \theta_r \\  \sigma_1 & \sigma_2 & 
\dots & \sigma_{s-1} \end{matrix} \bigg| \pi^{s-r}t\bigg) = \sum_{j=0}\frac{(\theta_1)_j\cdots(
\theta_r)_j\pi^{(s-r)j}}{j!(\sigma_1)_j\cdots(\sigma_{s-1})_j}t^j
\end{equation}
with $s\geq r$.  In some cases, this improves on Dwork's criterion \cite[Lemma~2.2]{D2} (see 
the remark following Proposition~4.10).  Dwork assumes that the $\theta_i$ and $\sigma_j$ are 
$p$-integral rational numbers not lying in $-{\mathbb N}$.  We impose the additional 
condition that the $\theta_i$ and $\sigma_j$ lie in the half-open interval $(0,1]$.

To state the hypothesis we need a definition.  Let $(\theta_i)_{i=1}^r$, $(\sigma_j)_{j=1}^{s-1}$, 
$s\geq r$, be sequences of real numbers in the interval $(0,1]$.  Suppose first that $s=r$.  
We create two sequences of length $r$ by setting $\sigma_r=1$.  Since the definition (4.1) is 
independent of the ordering of the $\theta_i$ and $\sigma_j$, we may assume that they satisfy:
\[ \theta_1\leq\theta_2\leq \dots\leq \theta_r\quad\text{and}\quad \sigma_1\leq\sigma_2\leq\dots
\leq\sigma_r. \]
We say that $(\theta_i)_{i=1}^r$ is {\it dominated by\/} $(\sigma_j)_{j=1}^{s-1}$ if
\[ \theta_k<\sigma_k\quad\text{for $k=1,\dots,r$.} \]

Now suppose that $s>r$.  If $s=r+1$, we have two sequences of length $s-1$ and there is no need 
to augment either one.  
Assuming again that they satisfy $\theta_i\leq \theta_{i+1}$ and $\sigma_j\leq \sigma_{j+1}$ for all $i$ 
and $j$, we say that
$(\theta_i)_{i=1}^r$ is {\it dominated by\/} $(\sigma_j)_{j=1}^{s-1}$ if
\[ \theta_k<\sigma_k \quad\text{for $k=1,\dots,s-1$.} \]

If $s>r+1$, the situation is more complicated.  We combine the two sequences $(\theta_i)_{i=1}^r$ of 
length $r$  and $(i/(s-r))_{i=1}^{s-r-1}$ of length $s-r-1$ to create a sequence 
$(\tilde{\theta}_i)_{i=1}^{s-1}$ satisfying $\tilde{\theta}_i\leq \tilde{\theta}_{i+1}$ for all $i$.  It 
will be important to identify the terms of the form $k/(s-r)$ in this new sequence.  Let 
$\tilde{\theta}_{i_k} = k/(s-r)$ for $k=1,\dots,s-r-1$.  We may assume the sequence is ordered so 
that $\tilde{\theta}_{i_k}>\tilde{\theta}_{i_k-1}$ for all $k$ (unless $i_k=1$).  Supposing again that 
$\sigma_j\leq \sigma_{j+1}$ for all~$j$, we say that $(\theta_i)_{i=1}^r$ is 
{\it dominated by\/}~$(\sigma_j)_{j=1}^{s-1}$ if
\begin{equation}
\begin{split}
\tilde{\theta}_l & <  \sigma_l\quad\text{for $l\in\{1,\dots,s-1\}\setminus\{i_1,\dots i_{s-r-1}\}$,} \\
\tilde{\theta}_{i_k} & \leq\sigma_{i_k}\quad\text{for $k=1,\dots,s-r-1$.} 
\end{split}
\end{equation}

It will be convenient to have an equivalent formulation of this notion which avoids ordering the 
$\theta_i$ and $\sigma_j$.  We associate to each $j$, $j=1,\dots,s-1$, two nonnegative integers 
$I_j$, $J_j$, as follows:
\begin{align*}
I_j &= {\rm card} \{i\in\{1,\dots,r\}\mid \theta_i<\sigma_j\}, \\
J_j &= {\rm card} \{ j'\in\{1,\dots,s-1\}\mid \sigma_j\geq\sigma_{j'}\}.
\end{align*}
Note that $J_j\geq 1$ if $s>1$.  
\begin{lemma}
Let $(\theta_i)_{i=1}^r$ and $(\sigma_j)_{j=1}^{s-1}$ be sequences of $p$-integral rational numbers 
from the interval $(0,1]$ with $r\leq s$.  \\
{\bf (a)} For $r=s$, the sequence $(\theta_i)_{i=1}^r$ is {\it dominated by\/} the sequence 
$(\sigma_j)_{j=1}^{s-1}$ if and only if $\theta_i<1$ for all $i$ and $I_j\geq J_j$ for all $j$.  \\ 
{\bf (b)} For $s=r+1$, the sequence $(\theta_i)_{i=1}^r$ is {\it dominated by\/} the sequence 
$(\sigma_j)_{j=1}^{s-1}$ if and only if  $I_j\geq J_j$ for all $j$.  \\ 
{\bf (c)} For $s>r+1$, the sequence $(\theta_i)_{i=1}^r$ is {\it dominated by\/} the sequence 
$(\sigma_j)_{j=1}^{s-1}$ if and only if for all $j$, either $I_j\geq J_j$ or $\sigma_j\geq 
(J_j-I_j)/(s-r)$.
\end{lemma}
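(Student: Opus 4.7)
The plan is to replace the ``sorted pointwise comparison'' definition of dominance with an equivalent counting condition that avoids any explicit sorting. The key observation is that $J_j$ records the largest position in the sorted $\sigma$-sequence at which the value $\sigma_j$ occurs, while $I_j$ counts original $\theta_i$'s strictly less than $\sigma_j$. Since $\tilde{\theta}_{(l)} \leq \tilde{\theta}_{(l')}$ whenever $l \leq l'$, the dominance inequalities at all positions whose sorted $\sigma$-value equals $\sigma_j$ are implied by the inequality at the maximal such position $l = J_j$. This reduces the verification to checking the strongest dominance condition for each distinct $\sigma$-value, which is of the form ``at least $J_j$ of the (augmented) $\theta$-terms lie appropriately below $\sigma_j$.''

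For parts (a) and (b) this reduction gives the equivalence immediately. In (a) the positions $k < r$ correspond to the original $\sigma_j$'s and yield the inequalities $I_j \geq J_j$; the augmented position $k = r$ with $\sigma_{(r)} = 1$ forces $\theta_{(r)} < 1$, i.e., $\theta_i < 1$ for every $i$. In (b) no augmentation is needed, since the two sequences already have the common length $s - 1 = r$, and the counting conditions $I_j \geq J_j$ for every $j$ are exactly the sorted dominance conditions. In each case I would argue each direction by running the reduction: taking $l = J_j$ for each $j$ recovers the $I_j \geq J_j$ inequalities, and conversely these together imply dominance at every sorted position, since every position $l$ in the sorted $\sigma$-sequence satisfies $l \leq J_j$ for some $j$ with $\sigma_{(l)} = \sigma_j$.

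Part (c) is where the main obstacle lies. The $\theta$-sequence is augmented with the fractions $k/(s-r)$; by the convention $\tilde{\theta}_{i_k} > \tilde{\theta}_{i_k - 1}$ (when $i_k > 1$), any original $\theta_i$ equal to $k/(s-r)$ is placed after the augmented copy in the sorted order, and the dominance inequality at each position $i_k$ is the non-strict one $\tilde{\theta}_{i_k} \leq \sigma_{i_k}$. Fixing $j$ and setting $l = J_j$, I would count the $\tilde{\theta}$-terms furnishing dominance at position $l$: the $I_j$ original $\theta$'s strictly below $\sigma_j$, together with the fractions $k/(s-r) \leq \sigma_j$, of which there are $\lfloor \sigma_j(s-r)\rfloor$ when $\sigma_j < 1$ and $s - r - 1$ when $\sigma_j = 1$. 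The dominance condition at $l = J_j$ then becomes ``the number of such witnesses is at least $J_j$,'' which rearranges to ``$I_j \geq J_j$, or $\sigma_j \geq (J_j - I_j)/(s-r)$'' since $J_j - I_j$ is a positive integer in the second alternative. The hardest part will be the bookkeeping: verifying that the tie-breaking rule and the distinction between strict and non-strict inequalities combine exactly to give the stated condition, and handling the borderline values $\sigma_j = 1$ and $\sigma_j = k_0/(s-r)$ where the fraction $k_0/(s-r)$ appears in both sequences.
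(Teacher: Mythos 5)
Your reduction to the single position $l=J_j$ for each value $\sigma_j$ is valid (the tie-breaking rule $\tilde{\theta}_{i_k}>\tilde{\theta}_{i_k-1}$ does make the inequality at the top position imply the ones at the lower positions carrying the same $\sigma$-value), and the witness count you set up,
\[ W_j := I_j + \mathrm{card}\{k\in\{1,\dots,s-r-1\}\mid k/(s-r)\leq\sigma_j\}, \]
does characterize dominance: every witness precedes every non-witness in the sorted $\tilde{\theta}$-sequence (using the convention that an augmented fraction precedes an equal original $\theta_i$), so the dominance inequality at position $J_j$ holds exactly when $W_j\geq J_j$. Parts (a) and (b) then follow from the standard sorted-sequence comparison, as you say. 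This is a genuinely different route from the paper, which proves only part (c) and argues the ``if'' direction by induction on the sorted position $j$, maintaining the count $j'$ of original $\theta$'s placed so far and using the recursion $\tilde{\theta}_{j+1}=\min\{\theta_{j'+1},(j+1-j')/(s-r)\}$.

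The gap is in the final ``rearrangement'' step of part (c), precisely at the borderline you defer as bookkeeping. Since only $s-r-1$ fractions are available, $W_j\geq J_j$ with $I_j<J_j$ requires \emph{both} $\sigma_j\geq(J_j-I_j)/(s-r)$ \emph{and} $J_j-I_j\leq s-r-1$; the condition stated in the lemma omits the second clause, and the two disagree exactly when $\sigma_j=1$ and $J_j-I_j=s-r$. Concretely, take $r=1$, $s=3$, $\theta_1=1$, $\sigma_1=\sigma_2=1$: then $I_j=0$, $J_j=2$, and $\sigma_j=1\geq 2/2$, so the condition of part (c) holds, yet $\tilde{\theta}=(1/2,1)$ and the required strict inequality $\tilde{\theta}_2<\sigma_2$ fails. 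So your (correct) counting criterion cannot be rearranged into the stated one; to finish you must either add a hypothesis excluding this boundary configuration or observe that only the ``only if'' direction (which is what the paper actually invokes later, in the proof of Lemma 4.12) is needed and is unaffected. For what it is worth, the paper's own induction has the same blind spot: the recursion above tacitly presupposes that the fraction $(j+1-j')/(s-r)$ still exists, i.e., that $j+1-j'\leq s-r-1$.
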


\begin{proof}
To illustrate the idea, we prove part (c).  First suppose that $(\theta_i)_{i=1}^r$ is dominated 
by $(\sigma_j)_{j=1}^{s-1}$ and that for some $j$ we have $I_j<J_j$.  We must show that 
$\sigma_j\geq(J_j-I_j)/(s-r)$.  From the definition of $I_j$ we have (assuming the $\theta_i$ 
are listed in increasing order)
\[ \theta_1\leq\dots\leq \theta_{I_j}<\sigma_j\leq \theta_{I_j+1}. \]
From (4.2) and the definition of $J_j$ we have
\[ \tilde{\theta}_1\leq \dots \leq \tilde{\theta}_{J_j}\leq\sigma_j. \]
Since $J_j>I_j$, the sequence $(\tilde{\theta}_i)_{i=1}^{J_j}$ is obtained by combining the 
sequences $(\theta_i)_{i=1}^{I_j}$ and $(k/(s-r))_{k=1}^{J_j-I_j}$.  It follows that $\sigma_j
\geq (J_j-I_j)/(s-r)$.  

Now suppose that for all $j$, either $I_j\geq J_j$ or $\sigma_j\geq(J_j-I_j)/(s-r)$.  We must 
show that $(\theta_i)_{i=1}^r$ is dominated by $(\sigma_j)_{j=1}^{s-1}$.  The sequence 
$(\tilde{\theta}_i)_{i=1}^{s-1}$ is constructed inductively.  We assume that $\theta_i\leq\theta_{i+1}$ 
for all $i$.  First of all,
\[ \tilde{\theta}_1 = \min\{\theta_1,1/(s-r)\}. \]
Assume that for some $j\geq 1$ the sequence $\tilde{\theta}_1,\dots,\tilde{\theta_j}$ has been 
constructed and that it consists of the elements $\theta_1,\dots,\theta_{j'},1/(s-r),\dots,
(j-j')/(s-r)$ listed in increasing order, where $0\leq j'\leq j$.  Then
\begin{equation}
\tilde{\theta}_{j+1} = \min\{\theta_{j'+1},(j+1-j')/(s-r)\}. 
\end{equation}

Assume that $\sigma_j\leq \sigma_{j+1}$ for all $j$.  We prove (4.2) by induction on $j$.  
Either $I_1\geq J_1$, in which case $\sigma_1>\theta_1$ since $J_1\geq 1$, or $I_1< J_1$, in which 
case $\sigma_1\geq (J_1-I_1)/(s-r)\geq 1/(s-r)$.  This proves (4.2) for $\sigma_1$.  Now suppose 
that (4.2) holds for $\sigma_j$, we must prove it for $\sigma_{j+1}$.  Since
\[ \tilde{\theta}_j = \max\{\theta_{j'},(j-j')/(s-r)\}, \]
the induction hypothesis says that
\begin{equation}
\sigma_j>\theta_{j'}\text{ and } \sigma_j\geq(j-j')/(s-r). 
\end{equation}
By (4.4), we must show that
\begin{equation}
\sigma_{j+1}>\theta_{j'+1}\text{ or } \sigma_{j+1}\geq (j+1-j')/(s-r). 
\end{equation}
If $\sigma_{j+1}>\theta_{j'+1}$, there is nothing to prove, so suppose that 
$\sigma_{j+1}\leq \theta_{j'+1}$.  The first inequality of (4.5) then implies that $I_{j+1} = j'$; 
we have trivially $J_{j+1}\geq j+1$, so $I_{j+1}<J_{j+1}$.  Our hypothesis then implies that 
\[ \sigma_{j+1}\geq (J_{j+1}-I_{j+1})/(s-r)\geq (j+1-j')/(s-r). \]
\end{proof}

Let $\alpha$ be a $p$-integral rational number, $0\leq\alpha\leq 1$.  Choose a positive integer 
$a$ such that $(p^a-1)\alpha\in{\mathbb N}$ and set $t=(p^a-1)\alpha$.  For any nonnegative 
integer $k$ we define $\alpha^{(k)} = t^{(k)}/(p^a-1)$.

\begin{proposition}
Let $(\theta_i)_{i=1}^r$ and $(\sigma_j)_{j=1}^{s-1}$ be sequences of $p$-integral rational numbers 
from the interval $(0,1]$ with $r\leq s$.  Suppose that for all $k\in{\mathbb N}$ the 
sequence $(\theta_i^{(k)})_{i=1}^r$ is dominated by the sequence $(\sigma_j^{(k)})_{j=1}^{s-1}$.  Then 
the hypergeometric series~$(4.1)$ has $p$-integral coefficients.
\end{proposition}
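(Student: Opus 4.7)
The plan is to realise $(4.1)$ as $\Phi_v(\lambda)$ for an $A$-hypergeometric system whose lattice of relations has rank one, and then invoke Corollary $3.5$.

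\emph{Set-up.} I take $N=r+s$ and choose $A = \{\mathbf{a}_1,\ldots,\mathbf{a}_{r+s}\} \subseteq \mathbb{Z}^{r+s-1}$ so that $L$ is generated by $l_0 = (-1,\ldots,-1,+1,\ldots,+1)$ with $r$ entries $-1$ followed by $s$ entries $+1$; a convenient concrete choice is $\mathbf{a}_i = \mathbf{e}_i$ for $i \le r+s-1$ and $\mathbf{a}_{r+s} = \mathbf{e}_1 + \cdots + \mathbf{e}_r - \mathbf{e}_{r+1} - \cdots - \mathbf{e}_{r+s-1}$. Set $v = (-\theta_1,\ldots,-\theta_r,\sigma_1-1,\ldots,\sigma_{s-1}-1,0)$; then $v \in R$ since $\theta_i,\sigma_j \in (0,1]$, and let $\beta = \sum_i v_i \mathbf{a}_i$. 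Direct substitution of $l = j l_0$ into the definition of $\Phi_v$ yields Pochhammer factors $(-1)^{rj}(\theta_1)_j\cdots(\theta_r)_j$ in the numerator and $j!(\sigma_1)_j\cdots(\sigma_{s-1})_j$ in the denominator, with an exponent $(s-r)j$ on $\pi$ coming from $\sum_i(jl_0)_i = j(s-r)$. Thus $\Phi_v(\lambda)$ equals $(4.1)$ up to the monomial prefactor $\lambda^v$ and the change of variable $t = (-1)^r \lambda^{l_0}$.

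\emph{Computing $w(v)$ and reducing via Corollary $3.5$.} Using $(3.1)$ together with the identity $(\sigma_j - 1)^{(k)} = \sigma_j^{(k)} - 1$, which is a consequence of digit-complementation applied to $(q-1)(1-\sigma_j) = (q-1) - (q-1)\sigma_j$, one computes
$$w(v) = \frac{p-1}{a}\sum_{k=0}^{a-1}\Bigl(\sum_{i=1}^{r}\theta_i^{(k)} + \sum_{j=1}^{s-1}\bigl(1 - \sigma_j^{(k)}\bigr)\Bigr).$$
By Corollary $3.5$, $\Phi_v$ is $p$-integral provided $w(v) = \frac{p-1}{e}\sum_{k=0}^{e-1} w_\Delta(\sigma^\circ_{-\beta} \cap (-\beta^{(k)} + \mathbb{Z}^n))$, with $e\mid a$ the period of $-\beta + \mathbb{Z}^n$ under iteration. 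Since $L$ has rank one, every $r \in R_\beta$ has the shape $r = v + c l_0$ for some rational $c \in [-\min_{i,j}(\theta_i,\sigma_j),\, 0]$, so the right-hand side decouples iterate-by-iterate into a piecewise-linear minimization in the shifted digit $c^{(k)}$ built from the data $\theta_i^{(k)}$ and $\sigma_j^{(k)}$.

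\emph{The combinatorial core and main obstacle.} What remains is to show that, under the domination hypothesis reformulated via Lemma $4.6$, the infimum at each iterate level $k$ is attained at $c^{(k)} = 0$, so that $v$ itself realises the lower bound. This is the technical heart of the proof and the main obstacle. The three cases of Lemma $4.6$ govern three different shapes of the minimization: for $s = r$ the supplementary condition $\theta_i < 1$ rules out $p$-adic degenerations at $\theta_i^{(k)} = 1$; for $s = r+1$ the inequality $I_j \ge J_j$ by itself prevents any strict improvement from $c^{(k)} > 0$; and in the confluent regime $s > r+1$ additional admissible shifts arise from the trailing coordinate $v_{r+s}=0$ (reflected in the fractions $k/(s-r)$ appearing in the construction of $(\tilde\theta_i)$), and these are ruled out precisely by the alternative inequality $\sigma_j^{(k)} \ge (J_j - I_j)/(s-r)$. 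Showing that these combinatorial inequalities are equivalent to the vanishing of the LP-slack in the minimization computing $w_\Delta$ at every iterate is the delicate step that mirrors the sequence-merging construction in the proof of Lemma $4.6$.
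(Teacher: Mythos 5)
Your set-up is exactly the paper's: the same $A$, the same $v=(-\theta_1,\dots,-\theta_r,\sigma_1-1,\dots,\sigma_{s-1}-1,0)$, the same identification of $\Phi_v(\lambda)$ with the classical series up to a monomial, the same computation of $w(v)$ via (3.1) and digit complementation, and the same plan of invoking Corollary 3.5. But the argument stops precisely where the real work begins, and the one substantive claim you make about that remaining work is wrong. The right-hand side of the criterion in Corollary 3.5 is $\frac{p-1}{e}\sum_{k}w_\Delta\bigl(\sigma^\circ_{-\beta}\cap(-\beta^{(k)}+\mathbb{Z}^{r+s-1})\bigr)$, and the infimum defining $w_\Delta$ of that set ranges over \emph{all} lattice translates of $-\beta^{(k)}$ lying in the cone --- a full-rank coset of $\mathbb{Z}^{r+s-1}$. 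The fact that $L$ has rank one, so that $R_\beta=\{v+c\,l_0\}$ is one-dimensional, is irrelevant to this quantity; your claimed ``decoupling into a piecewise-linear minimization in the shifted digit $c^{(k)}$'' does not follow, and the problem you would actually have to solve is a minimization over an $(r+s-1)$-dimensional lattice coset, not over a single parameter $c^{(k)}$.

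The missing content is the paper's Lemma 4.12: if $(\theta_i)$ is dominated by $(\sigma_j)$, then $w_\Delta$ attains its minimum $\sum_i\theta_i+\sum_j(1-\sigma_j)$ on $(-\beta+\mathbb{Z}^{r+s-1})\cap C(\Delta)$ at $-\beta$ itself (applied at each iterate $k$, and then sandwiched against (3.4) to replace $C(\Delta)$ by $\sigma^\circ_{-\beta}$). The proof of that lemma is where the domination hypothesis is actually consumed: one writes $\gamma=-\beta+z$ with $z\in\mathbb{Z}^{r+s-1}$, decomposes $C(\Delta)$ into the cones over the codimension-one faces $\Delta_0,\dots,\Delta_{s-1}$ not containing the origin, and in each cone uses the nonnegativity of the coefficients expressing $\gamma$ in the vertices of that face together with the integrality of $z$ to round the resulting inequalities up, feeding in the inequalities $I_j\geq J_j$ or $\sigma_j\geq(J_j-I_j)/(s-r)$ from the reformulation of domination (the lemma you cite as 4.6 is Lemma 4.3 in the paper). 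Your final paragraph correctly senses that the three cases of that lemma must enter, but it describes the target inaccurately and proves none of it, so the proposal as it stands establishes only the reduction, not the proposition.
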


{\bf Remark:}  If $a$ is a positive integer such that $(p^a-1)\theta_i,(p^a-1)\sigma_j\in
{\mathbb N}$ for all~$i,j$, then $\theta_i^{(k)} = \theta_i^{(k+a)}$ and $\sigma_j^{(k)} = 
\sigma_j^{(k+a)}$ for all $i,j,k$.  It thus suffices to verify the hypothesis of Proposition~4.7 for 
$k=0,1,\dots,a-1$.

Before proving Proposition 4.7, we restate Dwork's criterion for comparison and show how Dwork's criterion follows from Proposition ~4.7 in the cases $r=s$ and $r=s-1$.  For each $i,j$, 
$1\leq i\leq r$, $1\leq j\leq s-1$, we may write (where $a$ is as in the preceding paragraph) 
\begin{align}
\theta_i &= (p^a-1)^{-1} (\theta_{i0} + \theta_{i1}p + \cdots + \theta_{i,a-1}p^{a-1}), \\
\sigma_j &= (p^a-1)^{-1} (\sigma_{j0} + \sigma_{j1}p + \cdots + \sigma_{j,a-1}p^{a-1}),
\end{align} 
where $0\leq \theta_{ik},\sigma_{jk}\leq p-1$.  For parameters in the interval $(0,1]$, one can 
use Lemma~4.3 to restate Dwork's criterion\cite[Lemma~2.2]{D2} as follows.
\begin{proposition}
Let $(\theta_i)_{i=1}^r$ and $(\sigma_j)_{j=1}^{s-1}$ be sequences of $p$-integral rational numbers 
from the interval $(0,1]$ with $r\leq s$.  Suppose that for $k=0,1,\dots,a-1$ the sequence 
$(\theta_{ik}/(p-1))_{i=1}^r$ is dominated by the sequence $(\sigma_{jk}/(p-1))_{j=1}^{s-1}$.  Then 
the hypergeometric series $(4.1)$ has $p$-integral coefficients.
\end{proposition}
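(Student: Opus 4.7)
The plan is to verify that the hypothesis stated here is exactly the content of Dwork's criterion \cite[Lemma~2.2]{D2}, specialized to parameters in $(0,1]$; Proposition~4.10 then follows at once from Dwork's result. The bridge between the two formulations is Lemma~4.3.

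First I would recall \cite[Lemma~2.2]{D2} in its original form. Dwork's hypothesis is a system of digit-by-digit inequalities: for each $k\in\{0,1,\dots,a-1\}$, after sorting, the digits $\theta_{ik}$ are required to be bounded by the digits $\sigma_{jk}$, with an augmentation coming from the factor $j!$ in the denominator of (4.1) and the normalizing factor $\pi^{(s-r)j}$. Dividing each digit by $p-1$ converts it into a rational number in the interval $[0,1]$, so the sequences $(\theta_{ik}/(p-1))_{i=1}^{r}$ and $(\sigma_{jk}/(p-1))_{j=1}^{s-1}$ are eligible inputs to Lemma~4.3.

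Next I would perform the translation case by case. When $s=r$, the factor $j!$ in (4.1) plays the role of an extra parameter $\sigma_{r}=1$, and Dwork's condition matches Lemma~4.3(a). When $s=r+1$, there is no extra factor and Dwork's condition matches Lemma~4.3(b). When $s>r+1$, the $p$-adic valuation of $\pi^{(s-r)j}/j!$ distributes the ``slack'' between adjacent digits in units of $1/(s-r)$, which is exactly the content of the auxiliary sequence $(k/(s-r))_{k=1}^{s-r-1}$ used to define the domination relation, and Lemma~4.3(c) gives the equivalence.

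The main obstacle is the case $s>r+1$. One has to track carefully how the Legendre contribution of $j!$ combines with the $\pi^{(s-r)j}$ normalization in Dwork's valuation estimate, and verify that the resulting slack distributes into exactly $s-r-1$ fractional allowances per digit position $k$. Once this is made explicit, Dwork's combinatorial condition for position $k$ becomes precisely the alternative ``$I_j\geq J_j$ or $\sigma_j\geq (J_j-I_j)/(s-r)$'' of Lemma~4.3(c) applied to the digit sequences $(\theta_{ik}/(p-1))_{i=1}^{r}$ and $(\sigma_{jk}/(p-1))_{j=1}^{s-1}$, completing the equivalence and hence the proof.
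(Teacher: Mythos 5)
Your proposal matches the paper's treatment exactly: the paper offers no independent proof of Proposition 4.10, presenting it purely as a restatement of Dwork's criterion \cite[Lemma~2.2]{D2} for parameters in $(0,1]$, with Lemma~4.3 supplying the translation between Dwork's counting conditions and the ``dominated by'' relation. The bookkeeping you defer in the case $s>r+1$ (how the valuation of $\pi^{(s-r)j}/j!$ produces the allowances $k/(s-r)$) is likewise left implicit in the paper, so there is nothing further to check against.
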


{\bf Remark:}  If $s=r$ or $s=r+1$, Proposition 4.10 follows easily from Proposition 4.7 as follows:  We have from 
(4.8) and (4.9)
\[ \theta_i^{(k)} = (p^a-1)^{-1}(\theta_{ik} + \theta_{i,k+1}p + \cdots + \theta_{i,a-1}p^{a-k-1}+
\theta_{i0}p^{a-k} + \cdots + \theta_{i,k-1} p^{a-1}), \]
\[ \sigma_j^{(k)} = (p^a-1)^{-1}(\sigma_{jk} + \sigma_{j,k+1}p + \cdots + \sigma_{j,a-1}p^{a-k-1}+
\sigma_{j0}p^{a-k} + \cdots + \sigma_{j,k-1} p^{a-1}), \]
for $k=1,\dots,a-1$.  This shows that $\theta_i^{(k)}<\sigma_j^{(k)}$ if $\theta_{i,k-1}<\sigma_{j,k-1}$, 
hence the hypothesis of Proposition 4.10 implies the hypothesis of Proposition 4.7.  The examples 
of $p$-integral hypergeometric series given in \cite[Section 5]{D2} may thus be proved to be 
$p$-integral using Proposition~4.7.  When $s>r+1$, neither proposition is stronger than the other.  
For example, if $p=7$, $r=0$, $s=3$, $\sigma_1 = 3/8$, and $\sigma_2 = 5/8$, then the hypothesis 
of Proposition~4.10 is satisfied but the hypothesis of Proposition~4.7 is not.  And if $p=11$, 
$r=0$, $s=3$, $\sigma_1 = 41/120$, and $\sigma_2=81/120$, then the hypothesis of Proposition 4.7 
is satisfied but the hypothesis of Proposition 4.10 is not.

{\bf Example 2:}  Consider the series ${}_2F_1\bigg(\begin{matrix} 5/13 & 6/13 \\ & 1/2 
\end{matrix}\;\bigg|\;t\bigg)$ and let $p=3$.  We may take $a=3$ and we have 
\[ \theta_1 = 5/13,\;\theta_2=6/13;\quad \theta_1' = 6/13,\;\theta_2' = 2/13;\quad \theta_1'' = 
2/13,\;\theta_2'' = 5/13. \]
Since $\sigma_1=\sigma_1'=\sigma_1'' = 1/2$, Proposition 4.7 implies that this series has 
$3$-integral coefficients.  But we have $\theta_{12} = \theta_{22} = \sigma_{12} = 1$, so the 
hypothesis of Proposition~4.10 is not satisfied.

{\it Proof of Proposition $4.7$.}  
Let $A=\{{\bf a}_1,\dots,{\bf a}_{r+s}\}\subseteq{\mathbb R}^{r+s-1}$, where ${\bf a}_1$,\dots,
${\bf a}_{r+s-1}$ are the standard unit basis vectors and ${\bf a}_{r+s} = (1,\dots,1,-1,\dots,-1)$ 
(1 repeated $r$ times followed by $-1$ repeated $s-1$ times).  Let 
\[ v=(-\theta_1,\dots,-\theta_r,\sigma_1-1,\dots,\sigma_{s-1}-1,0)\in({\mathbb Q}\cap{\mathbb Z}_p
\cap[-1,0])^{r+s}. \]
Then $v\in R_\beta$ for $\beta = (-\theta_1,\dots,-\theta_r,\sigma_1-1,\dots,\sigma_{s-1}-1)\in
{\mathbb R}^{r+s-1}$.  It is clear that
\begin{align*}
L &= \{(-l,\dots,-l,l,\dots,l)\in{\mathbb Z}^{r+s}\mid l\in{\mathbb Z}\}, \\
L_v &=  \{(-l,\dots,-l,l,\dots,l)\in{\mathbb Z}^{r+s}\mid l\in{\mathbb N}\}.
\end{align*}
Our hypothesis implies that $0<\theta_i<1$ for $i=1,\dots,r$, so ${\rm nsupp}(v) = \emptyset$.  
In particular, $v$ has minimal negative support.  A straightforward calculation shows that
\begin{multline} 
\Phi_v(\lambda) = \\
\bigg(\prod_{i=1}^r \lambda_i^{-\theta_i}\prod_{j=1}^{s-1} \lambda_{r+j}^{\sigma_j-1}\bigg){}_rF_{s-1}\bigg(
\begin{matrix} \theta_1 & \dots & \theta_r \\ \sigma_1 & \dots & \sigma_{s-1} \end{matrix}\;\bigg|\; 
(-1)^r\pi^{s-r}\frac{\lambda_{r+1}\cdots\lambda_{r+s}}{\lambda_1\cdots\lambda_r}\bigg). 
\end{multline}

The key to proving Proposition 4.7 is the following result.
\begin{lemma}
Let $(\theta_i)_{i=1}^r$ and $(\sigma_j)_{j=1}^{s-1}$ be sequences of $p$-integral rational numbers 
from the interval $(0,1]$ with $r\leq s$.  If  $(\theta_i)_{i=1}^r$ is dominated by 
$(\sigma_j)_{j=1}^{s-1}$, then
\[ w_\Delta\big((-\beta+{\mathbb Z}^{r+s-1})\cap C(\Delta)\big) = \sum_{i=1}^r \theta_i + 
\sum_{j=1}^{s-1}(1-\sigma_j). \]
\end{lemma}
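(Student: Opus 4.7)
My plan is to compute $w_\Delta$ directly from the structure of $A$ and then check the claimed equality in two parts: an easy upper bound realized at $\gamma=-\beta$, and a lower bound absorbing the domination hypothesis. Since $a_1,\ldots,a_{r+s-1}$ are standard basis vectors while $a_{r+s}$ has entries $+1$ in positions $1,\ldots,r$ and $-1$ in positions $r+1,\ldots,r+s-1$, any non-negative representation $\gamma=\sum_k t_k a_k$ is parametrized by $T:=t_{r+s}\geq 0$ via $t_i=\gamma_i-T$ and $t_{r+j}=\gamma_{r+j}+T$. Summing yields the closed form
\[
w_\Delta(\gamma)=\sum_{k=1}^{r+s-1}\gamma_k+(s-r)T^*,
\]
where $T^*:=\max\bigl(0,\max_j(-\gamma_{r+j})\bigr)$ is the smallest feasible $T$, and feasibility further requires $T^*\leq\min_i\gamma_i$; when $s=r$ the last term vanishes and $T$ is irrelevant to the sum.

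With this formula the upper bound is immediate: taking $m=0$ gives $\gamma=-\beta$, and since every $\sigma_j\leq 1$ one has $T^*=0$, so $w_\Delta(-\beta)=\sum\theta_i+\sum(1-\sigma_j)=:W_0$. For the lower bound I would write a generic $\gamma=-\beta+m$ with $m\in\mathbb{Z}^{r+s-1}$, so that $w_\Delta(\gamma)=W_0+M+(s-r)T^*$ with $M:=\sum_k m_k$, then set $T^*=n+\epsilon$ with $n\in\mathbb{Z}_{\geq 0}$ and $\epsilon\in[0,1)$, and use the forced inequalities $m_i\geq\lceil T^*-\theta_i\rceil$ (from $t_i\geq 0$) and $m_{r+j}\geq\lceil\sigma_j-1-T^*\rceil$ (from $t_{r+j}\geq 0$) together with integrality. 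A routine but case-heavy ceiling computation then gives
\[
M+(s-r)T^*\;\geq\; n+(s-r)\epsilon+B-A-C,
\]
where $A=|\{i:\epsilon=0,\,\theta_i=1\}|$, $B=|\{i:\theta_i<\epsilon\}|$, and $C=|\{j:\sigma_j\leq\epsilon\}|$.

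The final step is to invoke domination. Working from the original definition of domination, one checks in each of the three regimes that $\theta_i<1$ for every $i$, so $A=0$; in particular if $\epsilon=0$ then $B=C=0$ (since $\theta_i,\sigma_j>0$) and the bound reduces to $n\geq 0$. If $\epsilon>0$, the key observation is that $T^*=\sigma_{j_0}-1-m_{r+j_0}$ for the arg-max index $j_0$, whence $\{T^*\}=\sigma_{j_0}$ forces $\epsilon=\sigma_{j_0}$ for some $j_0$ with $\sigma_{j_0}<1$; consequently $B=I_{j_0}$ and $C=J_{j_0}$ in the notation preceding Lemma 4.3, and the domination alternatives (either $I_{j_0}\geq J_{j_0}$, or $(s-r)\sigma_{j_0}\geq J_{j_0}-I_{j_0}$) produce exactly $n+(s-r)\epsilon+B\geq C$. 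The main obstacle will be the ceiling-arithmetic bookkeeping together with the case-by-case verification, across the three regimes of Lemma 4.3, that domination actually excludes $\theta_i=1$.
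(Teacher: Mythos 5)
Your argument is correct, and the computation underneath it is the same one the paper carries out, but you package it more uniformly. Where the paper decomposes $C(\Delta)$ into the subcones $C(\Delta_0),\dots,C(\Delta_{s-1})$ over the codimension-one faces and bounds the weight on each separately (doing $C(\Delta_1)$ ``to fix ideas''), you exploit the one-parameter family of nonnegative representations indexed by $T=t_{r+s}$ to obtain a single closed formula $w_\Delta(\gamma)=\sum_k\gamma_k+(s-r)T^*$; the paper's explicit representation (4.23) on $C(\Delta_1)$ is exactly your parametrization at $T=T^*$ when the arg-max index is $j_0=1$, and its inequalities (4.25)--(4.27) are your ceiling bounds. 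The second difference is how domination enters: the paper treats the nonconfluent case $r=s$ by a separate pairing argument taken directly from the sorted-sequence definition (producing $z_{i_r}\geq 0$ and $z_{i_k}+z_{r+j_k}\geq 0$) and only invokes the $I_j,J_j$ reformulation of Lemma 4.3 in the confluent case, whereas you route all three regimes through Lemma 4.3 by identifying $\epsilon=\{T^*\}$ with some $\sigma_{j_0}<1$ and reading off $B=I_{j_0}$, $C=J_{j_0}$ (both identifications are correct, matching the definitions of $I_j$ and $J_j$). This buys a genuinely unified proof at the cost of the ceiling bookkeeping you acknowledge, all of which checks out: $\sum_i\lceil T^*-\theta_i\rceil=rn+B-A$ and $\sum_j\lceil\sigma_j-1-T^*\rceil=-(s-1)n-C$ give exactly your stated bound. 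The one point to spell out is that $A=0$ in every regime: for $r=s$ it is part of Lemma 4.3(a); for $s=r+1$ it follows from $\theta_r<\sigma_r\leq 1$ after sorting; and for $s>r+1$ it holds because each $\theta_i$ occupies a position of the merged sequence outside $\{i_1,\dots,i_{s-r-1}\}$ and so is subject to the strict inequality in (4.2). With that in place, $n+(s-r)\epsilon+B-C\geq 0$ follows from the domination alternatives exactly as you say.
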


Before proving Lemma 4.12 we explain how it implies Proposition 4.7.  Lemma~4.12 and the 
definition of $v$ give
\[ w_\Delta\big((-\beta+{\mathbb Z}^{r+s-1})\cap C(\Delta)\big) = -\sum_{i=1}^{r+s} v_i. \]
The hypothesis of Proposition~4.7 thus implies that
\[ w_\Delta\big((-\beta^{(k)}+{\mathbb Z}^{r+s-1})\cap C(\Delta)\big) = -\sum_{i=1}^{r+s} v_i^{(k)} \]
for $k=0,1,\dots,a-1$.  Summing over $k$ and using (3.2) then gives
\[ w(v) = \frac{p-1}{a}\sum_{k=0}^{a-1} w_\Delta\big((-\beta^{(k)}+{\mathbb Z}^{r+s-1})\cap C(\Delta)
\big). \]
One has the trivial inequalities 
\[ w_\Delta\big((-\beta^{(k)}+{\mathbb Z}^{r+s-1})\cap \sigma_{-\beta}^\circ\big)\geq w_\Delta\big(
(-\beta^{(k)}+{\mathbb Z}^{r+s-1})\cap C(\Delta)\big) \]
and $w(v)\geq w(R_\beta)$, so Theorem 3.5 implies
\[ w(v) =  \frac{p-1}{a} \sum_{k=0}^{a-1} w_\Delta\big((-\beta^{(k)}+{\mathbb Z}^{r+s-1})\cap 
\sigma_{-\beta}^\circ\big). \]
By Corollary 3.6, the series $\Phi_v(\lambda)$ has $p$-integral coefficients.  Proposition 4.7 
now follows from (4.11).

\begin{proof}[Proof of Lemma $4.12$]
Let $u_1,\dots,u_{r+s-1}$ be coordinates on ${\mathbb R}^{r+s-1}$.  The polytope~$\Delta$, the convex 
hull of $A\cup\{{\bf 0}\}$, has a vertex at the origin.  If $r=s$, it has a unique codimension-one 
face not containing the origin, which we denote by $\Delta_0$.  This face has vertices ${\bf a}_1,
\dots,{\bf a}_{r+s}$ and lies in the hyperplane $\sum_{k=1}^{r+s-1} u_k=1$.  If $s>r$ there are $s$ 
codimension-one faces not containing the origin, all of them simplices, which we denote by 
$\Delta_0,\Delta_1,\dots,\Delta_{s-1}$.  The face $\Delta_0$ has vertices ${\bf a}_1,\dots,
{\bf a}_{r+s-1}$ and lies in the hyperplane $\sum_{k=1}^{r+s-1} u_k=1$.  For $i=1,\dots,s-1$, the set 
of vertices of $\Delta_i$ is $A\setminus\{{\bf a}_{r+i}\}$ and it lies in the hyperplane 
\[ \sum_{\substack{k=1\\ k\neq r+i}}^{r+s-1} u_k - (s-r-1) u_{r+i} = 1. \]

Since $-\beta\in C(\Delta_0)$, we have 
\[ w_\Delta(-\beta) = w_{\Delta_0}(-\beta) = \sum_{i=1}^r \theta_i + \sum_{j=1}^{s-1} (1-\sigma_j). \]
Thus the assertion of Lemma 4.12 is that the weight function $w_\Delta$ has  its minimum on 
$(-\beta+{\mathbb Z}^{r+s-1})\cap C(\Delta)$ at the point $-\beta$.  Let $\gamma=(\gamma_1,\dots,
\gamma_{r+s-1})\in(-\beta+{\mathbb Z}^{r+s-1})\cap C(\Delta)$.  We must show that
\begin{equation}
w_\Delta(\gamma)\geq \sum_{i=1}^r \theta_i + \sum_{j=1}^{s-1} (1-\sigma_j).
\end{equation}

Write $\gamma = -\beta + z$ with $z=(z_1,\dots,z_{r+s-1})\in{\mathbb Z}^{r+s-1}$, so that
\begin{equation}
\gamma_i = \theta_i + z_i\quad\text{for $i=1,\dots,r$}
\end{equation}
and
\begin{equation}
\gamma_{r+j} = 1-\sigma_j + z_{r+j}\quad\text{for $j=1,\dots,s-1$.}
\end{equation}
We divide the proof into two cases according as to whether $r=s$ (the nonconfluent case) or 
$r<s$ (the confluent case).
  
Consider first the case $r=s$, where $\Delta_0$ is the unique codimension-one face of $\Delta$ 
not containing the origin.  Then
\begin{equation}
w_\Delta(\gamma) = \sum_{i=1}^{r+s-1}\gamma_i = \sum_{i=1}^r\theta_i +\sum_{j=1}^{s-1}(1-\sigma_j) + 
\sum_{k=1}^{r+s-1} z_k.
\end{equation}
Thus to show (4.13), we need to show that
\begin{equation}
\sum_{k=1}^{r+s-1} z_k\geq 0
\end{equation}

The elements of the set $A$ satisfy the inequalities
\begin{equation} 
u_i\geq 0 \quad\text{for $i=1,\dots,r$} 
\end{equation}
and
\begin{equation}
u_i+u_j\geq 0 \quad\text{for $i=1,\dots,r$ and $j=r+1,\dots,r+s-1$}, 
\end{equation}
so all elements of $C(\Delta)$ satisfy these inequalities as well.  
In particular, $\gamma$ must satisfy these inequalities.  By hypothesis there are orderings 
of $\{1,\dots,r-1\}$ such that
\begin{equation}
\theta_{i_1}<\sigma_{j_1},\dots,\;\theta_{i_{r-1}}<\sigma_{j_{r-1}},\;\theta_{i_r}<1. 
\end{equation}
It follows from (4.14), (4.15), (4.18), and (4.19) that 
\begin{equation}
\theta_{i_r} + z_{i_r}\geq 0
\end{equation}
and that
\begin{equation}
\theta_{i_k} + z_{i_k} + (1-\sigma_{j_k}) + z_{r+{j_k}}\geq 0\quad\text{for $k=1,\dots,s-1$.}
\end{equation}
Since the $z_i,z_j$ are integers, Equations (4.20), (4.21), and (4.22) imply that
\[ z_{i_r}\geq 0 \]
and
\[ z_{i_k} + z_{r+j_k}\geq 0\quad\text{for $k=1,\dots,s-1$.} \]
Summing over all indices then gives
\[ \sum_{i=1}^{r+s-1} z_i = z_{i_r}+\sum_{k=1}^{s-1} (z_{i_k} + z_{r+j_k})\geq 0, \]
which establishes (4.17) and completes the proof of Lemma 4.12 when $r=s$.

Now suppose that $s>r$.  For $i=0,1,\dots,s-1$, let $C(\Delta_i)$ be the real cone generated by 
$\Delta_i$.  For $\gamma\in C(\Delta)$, one has $\gamma\in C(\Delta_i)$ for some $i$ and 
$w_\Delta(\gamma) = w_{\Delta_i}(\gamma)$.  Suppose first that $\gamma\in C(\Delta_0)$.  Then
\[ w_{\Delta_0}(\gamma) = \sum_{i=1}^r \theta_i + \sum_{j=1}^{s-1}(1-\sigma_j) + \sum_{k=1}^{r+s-1} z_k. \]
Since $C(\Delta_0)$ is the first orthant in ${\mathbb R}^{r+s-1}$, the condition $\gamma\in 
C(\Delta_0)$ and Equations~(4.14) and~(4.15) imply that $z_k\geq 0$ for all $k$ so (4.13) holds in 
this case.  

Now suppose that $\gamma\in C(\Delta_i)$ with $1\leq i\leq s-1$.  To fix ideas, we take $i=1$ and 
prove~(4.13) for $\gamma\in C(\Delta_1)$.  Since $C(\Delta_1)$ is the cone generated by 
$A\setminus\{{\bf a}_{r+1}\}$, we can write $\gamma$ (uniquely) as a linear combination with 
nonnegative coefficients of the vectors in that set.  Explicitly,
\begin{multline}
\gamma =(\sigma_1-1-z_{r+1}){\bf a}_{r+s} +  \sum_{i=1}^r \big((\theta_i+z_i) - (\sigma_1-1-z_{r+1})
\big){\bf a}_i \\ +\sum_{j=2}^{s-1} \big( (\sigma_1-1-z_{r+1}) - (\sigma_j - 1 -z_{r+j})\big){\bf a}_{r+j}. 
\end{multline}
The condition that $\gamma\in C(\Delta_1)$ is equivalent to the condition that the coefficients 
of the ${\bf a}_i$, $i\in A\setminus\{{\bf a}_{r+1}\}$, in~(4.23) are all nonnegative.  Furthermore, 
$w_{\Delta_1}(\gamma)$ is the sum of these coefficients:
\begin{equation}
w_{\Delta_1}(\gamma) = \sum_{i=1}^r (\theta_i+z_i) + \sum_{j=2}^{s-1}(1-\sigma_j+z_{r+j}) + (r-s+1)
(1-\sigma_1 + z_{r+1}).
\end{equation}

Consider the coefficients in (4.23).  If $\sigma_1-1-z_{r+1}=0$, then $\gamma\in C(\Delta_0)$ and we 
have already verified (4.13) in that case.  So we assume $\sigma_1-1-z_{r+1}>0$, which implies
\begin{equation}
z_{r+1}\leq -1.
\end{equation}
For $i=1,\dots,r$, we have $(\theta_i+z_i) - (\sigma_1-1-z_{r+1})\geq 0$, which implies
\begin{equation}
z_i\geq\begin{cases} -z_{r+1} & \text{if $\theta_i<\sigma_1$,} \\
-z_{r+1}-1 & \text{if $\theta_i\geq\sigma_1$.} \end{cases}
\end{equation}
For $j=2,\dots,s-1$, we have $(\sigma_1-1-z_{r+1}) - (\sigma_j - 1 -z_{r+j})\geq 0$, which implies
\begin{equation}
z_{r+j}\geq\begin{cases} z_{r+1} & \text{if $\sigma_1\geq\sigma_j$,} \\
z_{r+1}+1 & \text{if $\sigma_1<\sigma_j$.} \end{cases}
\end{equation}
Using (4.26) and (4.27) in (4.24) and simplifying gives
\begin{equation}
w_{\Delta_1}(\gamma)\geq w_\Delta(-\beta) + (s-r)\sigma_1-z_{r+1}+I_1-J_1-1.
\end{equation}
Equation (4.25) implies
\begin{equation}
w_{\Delta_1}(\gamma)\geq w_\Delta(-\beta) + (s-r)\sigma_1 +I_1-J_1.
\end{equation}
If $I_1\geq J_1$, then $w_{\Delta_1}(\gamma)> w_\Delta(-\beta)$ since $(s-r)\sigma_1>0$.  If $I_1<J_1$, 
then $w_{\Delta_1}(\gamma)\geq w_\Delta(-\beta)$ by Lemma~4.3(c).  This completes the proof of (4.13) 
for $\gamma\in C(\Delta_1)$.  
\end{proof}

{\bf Remark:}  If the $\theta_i, \sigma_j$ satisfy the analogous strict inequalities rather than (4.2) itself, then the above argument shows (in the confluent case $s>r$) that the point $-\beta$ is the unique element of 
$(-\beta+{\mathbb Z}^{r+s-1})\cap C(\Delta)$ satisfying
\[ w_{\Delta}(-\beta) = w_\Delta\big((-\beta+{\mathbb Z}^{r+s-1})\cap C(\Delta)\big), \]
i.~e., inequality (4.13) is strict unless $\gamma = -\beta$.  We believe that this uniqueness 
property of $-\beta$ is sufficient to imply the $p$-adic analytic continuation result for ratios of 
hypergeometric series that follows from Dwork\cite[Theorems 1.1 and 3.1]{D2}.

\section{$A$-hypergeometric series with integer coefficients}

In this section we suppose we are in the nonconfluent situation: there exists a linear form $h$ on 
${\mathbb R}^n$ such that $h({\bf a}_i) = 1$ for $i=1,\dots,N$.  In this case the normalizing factor 
$\pi$ cancels and the series $\Phi_v(\lambda)$ is independent of $p$:
\begin{equation}
\Phi_v(\lambda) = \sum_{l\in L_v}\frac{[v]_{l_-}}{[v+l]_{l_+}}\lambda^{v+l}. 
\end{equation}
Let $v=(v_1,\dots,v_N)\in R_\beta$ and suppose that for all $i$ either $v_i=0$ or $v_i=-1$.  The 
coefficients of $\Phi_v(\lambda)$ lie in ${\mathbb Q}$ and the $v_i$ are $p$-integral for all 
primes $p$.  We study conditions that guarantee the coefficients of $\Phi_v(\lambda)$ lie in 
${\mathbb Z}$.

To fix ideas, suppose that for some $M$, $0\leq M\leq N$, we have
\[ v_i = \begin{cases} -1 & \text{for $i=1,\dots,M$,} \\ 0 & \text{for $i=M+1,\dots,N$.} \end{cases} 
\]
Then $w(v) = M(p-1)$ and $\beta=-\sum_{i=1}^M {\bf a}_i\in{\mathbb Z}^n$.  
From Corollary~3.6 we get immediately the following result.
\begin{proposition}
If $w_\Delta(\sigma_{-\beta}^\circ\cap{\mathbb Z}^n) = M$, then $\Phi_v(\lambda)$ has integer coefficients.
\end{proposition}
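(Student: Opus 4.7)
The plan is to apply Corollary 3.5 for every prime $p$ and then invoke the elementary fact that a rational number which is $p$-integral for every prime $p$ must lie in~${\mathbb Z}$.

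Fix a prime $p$. Since each $v_i\in\{-1,0\}$, I may take $a=1$ in the definition of $w(v)$: the vector $s=(1-p)v$ has $s_i=p-1$ for $i\leq M$ and $s_i=0$ otherwise. Each $s_i$ is a single base-$p$ digit, so the rotation $s^{(k)}$ equals $s$, hence $v^{(k)}=v$, for every $k$. This immediately gives $w(v)=w_p(s)=M(p-1)$. Moreover, since $\beta=-\sum_{i=1}^M{\bf a}_i\in{\mathbb Z}^n$, we have $-\beta+{\mathbb Z}^n={\mathbb Z}^n$, and because $-\sum_i v_i^{(k)}{\bf a}_i=\beta$ for all $k$, we may choose $\beta^{(k)}$ so that $-\beta^{(k)}+{\mathbb Z}^n={\mathbb Z}^n$ as well. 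In particular the integer $e$ introduced just before~(3.4) equals~$1$.

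With $e=1$ and $-\beta^{(0)}+{\mathbb Z}^n={\mathbb Z}^n$, the lower bound~(3.4) collapses to
\[ w(R_\beta)\geq (p-1)\,w_\Delta(\sigma_{-\beta}^\circ\cap{\mathbb Z}^n)=(p-1)M, \]
using the hypothesis of the proposition. Combined with the trivial inequality $w(v)\geq w(R_\beta)$ and the equality $w(v)=M(p-1)$ computed above, this forces $w(R_\beta)=w(v)$. By Corollary~3.5 (which, like Theorem~1.5, does not require $v$ to have minimal negative support), the series $\Phi_v(\lambda)$ has $p$-integral coefficients.

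Finally, equation~(5.1) shows that in the nonconfluent setting the coefficients of $\Phi_v(\lambda)$ already lie in~${\mathbb Q}$. Since the argument above produces $p$-integrality for every prime $p$, each coefficient lies in $\bigcap_p {\mathbb Z}_{(p)}={\mathbb Z}$, as required. The proof encounters no serious obstacle; the entire content is the observation that the special shape of~$v$ makes $s$ a vector of single base-$p$ digits, which in turn makes~$e=1$ and reduces the hypothesis of Corollary~3.5 to precisely the assumption of the proposition.
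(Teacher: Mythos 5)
Your proof is correct and takes exactly the route the paper intends: the paper deduces Proposition~5.2 ``immediately'' from Corollary~3.5, having already recorded that $w(v)=M(p-1)$ and $\beta\in{\mathbb Z}^n$, and your observations that $v^{(k)}=v$, that $e=1$, and that (3.4) then reads $w(R_\beta)\geq (p-1)M$ supply precisely the omitted details, with the final passage to ${\mathbb Z}$ using that the series (5.1) is independent of $p$. The only blemish is a harmless sign slip: $-\sum_i v_i^{(k)}{\bf a}_i=-\beta$, not $\beta$, though this does not affect the argument since $\beta\in{\mathbb Z}^n$.
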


{\bf Remark:}  Note that $\Delta$ is a pyramid with vertex at the origin and base in the hyperplane 
$h=1$.  It follows that for $\gamma\in C(\Delta)$ one has $w_\Delta(\gamma) = h(\gamma)$.  In 
particular, $w_\Delta(-\beta) = h(-\beta) = M$.  Thus Proposition~5.2 says that $\Phi_v(\lambda)$ 
will have integer coefficients if the linear form $h$ assumes its minimum value on 
$\sigma_{-\beta}^\circ\cap{\mathbb Z}^n$ at~$-\beta$.

{\bf Example 3:}  Consider the set $A= \{ {\bf a}_1,\dots,{\bf a}_6\}\subseteq{\mathbb R}^4$ given by
\[ {\bf a}_1 = (2,0,0,1),\;{\bf a}_2 = (0,2,0,1),\;{\bf a}_3 = (0,0,2,1), \]
\[ {\bf a}_4 = (1,1,0,1),\;{\bf a}_5 = (1,0,1,1),\;{\bf a}_6 = (0,1,1,1). \]
One computes that
\[ L = \{(l_1,l_2,l_3,-l_1-l_2+l_3,-l_1+l_2-l_3,l_1-l_2-l_3)\mid l_1,l_2,l_3\in{\mathbb Z}\}. \]
Take $v=(0,0,0,-1,-1,0)$, giving $\beta = (-2,-1,-1,-2)$.  One computes that
\begin{multline*} L_v = \\ 
\{(l_1,l_2,l_3,-l_1-l_2+l_3,-l_1+l_2-l_3,l_1-l_2-l_3)\mid \text{$l_1,l_2,l_3\in{\mathbb N}$ and $l_1\geq l_2+l_3$}\}. 
\end{multline*}
This gives
\begin{multline*}
\Phi_v(\lambda) = (\lambda_4\lambda_5)^{-1} \\ 
\cdot\sum_{l_2,l_3=0}^\infty \sum_{l_1=l_2+l_3}^\infty \frac{(l_1+l_2-l_3)!(l_1-l_2+l_3)!}{l_1!l_2!l_3!(l_1-l_2-l_3)!} \bigg(\frac{\lambda_1\lambda_6}{\lambda_4\lambda_5}\bigg)^{l_1} \bigg(\frac{\lambda_2\lambda_5}{\lambda_4\lambda_6}\bigg)^{l_2} \bigg(\frac{\lambda_3\lambda_4}{\lambda_5\lambda_6}\bigg)^{l_3}. 
\end{multline*}
The set $\Delta$ is the convex hull of the origin and the points ${\bf a}_1$, ${\bf a}_2$, ${\bf a}_3$, so $-\beta$ is an interior point of $C(\Delta)$ and $\sigma_{-\beta}^\circ$ is the interior of $C(\Delta)$.  If we let $u_1,\dots,u_4$ be the coordinates on ${\mathbb R}^4$, then the linear form $h(u) = u_4$ satisfies $h({\bf a}_i)=1$ for $i=1,\dots,6$.  There are no interior lattice points $u\in C(\Delta)$ with $h(u)=1$, so $h$ assumes its minimum value of $2$ on $\sigma_{-\beta}^\circ\cap{\mathbb Z}^4$ at the point $-\beta$.  It now follows from Proposition~5.2 that the series $\Phi_v(\lambda)$ has integer coefficients.  Note that this does not follow trivially from the form of the coefficients, i.~e., they are not binomial coefficients.

We examine the $A$-hypergeometric systems associated to complete intersections in the $n$-torus 
${\mathbb T}^n$.  This requires changing our notation somewhat from the rest of the paper for the 
remainder of this section.  Consider sets $A_i = \{{\bf a}_1^{(i)},\dots,{\bf a}_{N_i}^{(i)}\}\subseteq
{\mathbb Z}^n$, $i=1,\dots,M$.  We consider the family of complete intersections in the $n$-torus 
${\mathbb T}^n$ over ${\mathbb C}$ defined by the equations
\[ f_{i,\lambda}(x):=\sum_{j=1}^{N_i} \lambda_{ij}x^{{\bf a}_j^{(i)}} = 0\quad\text{for $i=1,\dots,M$,} \]
where the $\lambda_{ij}$ are indeterminates.  

We define $\hat{\bf a}_j^{(i)}\in{\mathbb Z}^{n+M}$ for $i=1,\dots,M$ by
\[ \hat{\bf a}_j^{(i)} = ({\bf a}_j^{(i)},0,\dots,0,1,0,\dots,0), \]
where the ``1'' occurs in the $(n+i)$-th entry, and we consider the $A$-hyper\-geometric system 
associated to the set
\[ A:= \{\hat{\bf a}_j^{(i)}\mid i=1,\dots,M,\; j=1,\dots,N_i\}. \]
Note that if we let $u_1,\dots,u_{n+M}$ be the coordinate functions on ${\mathbb R}^{n+M}$ and set $h(u) = \sum_{i=1}^M u_{n+i}$, then all $\hat{\bf a}_j^{(i)}$ lie in the hyperplane $h(u) = 1$, so we are in the nonconfluent situation.

Fix $M'\leq M$.  For each $i=1,\dots,M'$, we distinguish a monomial by choosing ${\bf a}_{j_i}^{(i)}\in A_i$.  We shall 
associate to the sequence $({\bf a}_{j_i}^{(i)})_{i=1}^{M'}$ of distinguished monomials an $A$-hypergeometric series with integer 
coefficients.   Set $-\beta = \sum_{i=1}^{M'} {\bf a}_{j_i}^{(i)}$,  set $v=(v_j^{(i)})$, $i=1,\dots,M$, 
$j=1,\dots,N_i$, where
\begin{equation}
 v^{(i)}_j = \begin{cases} -1 & \text{if $i\in\{1,\dots,M'\}$ and $j=j_i$,} \\ 0 & \text{otherwise,} \end{cases} 
\end{equation}
and define
\[ \hat{\beta} = \sum_{i=1}^M\sum_{j=1}^{N_i} v^{(i)}_j\hat{\bf a}_j^{(i)} = (\beta,-1,\dots,-1,0,\dots,0)\in
{\mathbb Z}^{n+M} \]
(``$-1$'' appears $M'$ times, ``$0$'' appears $M-M'$ times).
\begin{proposition}
The vector $v\in R_{\hat{\beta}}$ has minimal negative support and the series $\Phi_v(\lambda)$ is a 
solution with integer coefficients of the $A$-hypergeometric system with parameter $\hat{\beta}$.
\end{proposition}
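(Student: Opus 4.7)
The plan is to establish the proposition's four assertions: membership $v \in R_{\hat\beta}$, minimality of the negative support, the hypergeometric solution property, and integrality of the coefficients. The first is immediate from $\hat\beta = \sum v^{(i)}_j \hat{\bf a}^{(i)}_j$ and the fact that each $v^{(i)}_j \in \{-1,0\} \subset [-1,0] \cap {\mathbb Z}_p$. Once minimal negative support is known, the solution property follows from \cite[Proposition 3.4.13]{SST}. The real content is therefore the minimality and the integrality.

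For minimal negative support, the key observation is that the last $M$ coordinates of the defining equation $\sum l^{(i)}_j \hat{\bf a}^{(i)}_j = {\bf 0}$ of $L$ force $\sum_j l^{(i)}_j = 0$ for each $i = 1,\ldots,M$. Since ${\rm nsupp}(v) = \{(i,j_i) : 1\leq i\leq M\}$, any $l$ with ${\rm nsupp}(v+l) \subsetneq {\rm nsupp}(v)$ must have $l^{(i_0)}_{j_{i_0}} \geq 1$ for some $i_0$. The per-block vanishing of $\sum_j l^{(i_0)}_j$ then forces $l^{(i_0)}_{j'} \leq -1$ for some $j' \neq j_{i_0}$; but this makes $(i_0,j') \in {\rm nsupp}(v+l) \setminus {\rm nsupp}(v)$, contradicting proper containment.

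For integrality of the coefficients, I would first note that the linear form $h(y_1,\ldots,y_{n+M}) = y_{n+1}+\cdots+y_{n+M}$ takes value $1$ on every $\hat{\bf a}^{(i)}_j$, so the system is nonconfluent and $\Phi_v(\lambda) = \phi_v(\lambda)$ is independent of $p$ with rational coefficients. It therefore suffices to show $p$-integrality for every prime $p$. Fix $p$ and choose $q = p^a$; since $(1-q)v$ has $M$ entries equal to $q-1 = (p-1)(1+p+\cdots+p^{a-1})$ and the rest zero, a direct computation gives $w(v) = M(p-1)$. For any $r \in R_{\hat\beta}$, reading off the $(n+i)$-th coordinate of $\sum_{i,j} r^{(i)}_j \hat{\bf a}^{(i)}_j = \hat\beta$ yields $\sum_j r^{(i)}_j = -1$. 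Corollary 5.8 applied to each block $(r^{(i)}_1,\ldots,r^{(i)}_{N_i}) \in R$ then gives $w(r^{(i)}) \geq p-1$. Since weights are additive across disjoint blocks of coordinates (using any common choice of exponent $a$), we conclude $w(r) = \sum_{i=1}^M w(r^{(i)}) \geq M(p-1) = w(v)$, so $w(v) = w(R_{\hat\beta})$. Theorem 1.5 delivers $p$-integrality for every $p$, which combined with rationality forces the coefficients into ${\mathbb Z}$.

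The main obstacle I anticipate is recognizing the two block-structural facts that make everything go through: the lattice $L$ inherits the per-block relation $\sum_j l^{(i)}_j = 0$, and the parameter constraint $r \in R_{\hat\beta}$ inherits the per-block relation $\sum_j r^{(i)}_j = -1$. Once these are noted, the minimality argument reduces to a one-line pigeonhole, and the weight bound reduces to summing $M$ independent applications of Corollary 5.8.
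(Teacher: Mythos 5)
Your proposal is correct and follows essentially the same route as the paper: the minimality argument via the vanishing of the $(n+i)$-th coordinate of $\sum_{i,j} l^{(i)}_j\hat{\bf a}^{(i)}_j$, and the integrality via the per-block identity $\sum_j r^{(i)}_j=-1$, Corollary~5.8, and Theorem~1.5 applied for every prime. Your version merely makes explicit two points the paper leaves implicit (the additivity of $w$ across blocks and the nonconfluence witness $h(y)=y_{n+1}+\cdots+y_{n+M}$), which is harmless.
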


\begin{proof}
Suppose that $l=(l_j^{(i)})_{i,j}\in L$ has $l_{j_I}^{(I)}>0$ for some $I\in\{1,\dots,M'\}$.  We have 
$\sum_{i,j} l_j^{(i)}\hat{\bf a}_j^{(i)} = {\bf 0}$; the vanishing of the $(n+I)$-th coordinate on the 
left-hand side implies that $l_{j'}^I<0$ for some $j'\neq j_I$.  It follows that ${\rm nsupp}(v+l)$ 
cannot be a proper subset of ${\rm nsupp}(v)$.

Fix a prime $p$.  We clearly have $w(v) = M'(p-1)$.  Note that 
\[ \sigma_{-\hat{\beta}}^\circ\subseteq C(\Delta)\cap\{ (u_1,\dots,u_{n+M})\in{\mathbb R}^{n+M}\mid \text{$u_{n+i}>0$ for $i=1,\dots,M'$}\}. \]
It follows that $h$ cannot assume any value less than $M'$ at a lattice point in $\sigma^\circ_{-\hat{\beta}}$, so $h$ assumes its minimum value of $M'$ on $\sigma^\circ_{-\hat{\beta}}\cap{\mathbb Z}^{n+M}$ at the point $-\hat{\beta}$.  Proposition~5.2 now implies that $\Phi_v(\lambda)$ has integer coefficients.
\end{proof}

{\bf Example 4:}  Consider the complete intersection defined by the equations
\begin{align*}
f_{1,\lambda}(x_1,\dots,x_6) &= \lambda_{11}x_1^3+\lambda_{12}x_2^3 +\lambda_{13}x_3^3 + 
\lambda_{14} x_4x_5x_6 = 0 \\
f_{2,\lambda}(x_1,\dots,x_6) &= \lambda_{21}x_4^3+\lambda_{22}x_5^3 +\lambda_{22}x_6^3 + 
\lambda_{24} x_1x_2x_3 = 0.
\end{align*} 
For notational convenience we let ${\bf e}_1,\dots,{\bf e}_6$ denote the standard unit basis 
vectors in ${\mathbb R}^6$.  Then
$A_1 = \{{\bf a}_1^{(1)},\dots,{\bf a}_4^{(1)}\}$, where
\[ {\bf a}_1^{(1)} = 3{\bf e}_1,\;{\bf a}_2^{(1)} = 3{\bf e}_2,\;{\bf a}_3^{(1)} = 3{\bf e}_3,\;
{\bf a}_4^{(1)} = {\bf e}_4+{\bf e}_5+{\bf e}_6, \]
and $A_2 = \{{\bf a}_1^{(2)},\dots,{\bf a}_4^{(2)}\}$, where
\[ {\bf a}_1^{(2)} = 3{\bf e}_4,\;{\bf a}_2^{(2)} = 3{\bf e}_5,\;{\bf a}_3^{(2)} = 3{\bf e}_6,\;
{\bf a}_4^{(2)} = {\bf e}_1+{\bf e}_2+{\bf e}_3. \]
We consider the series associated to the sequence $({\bf a}^{(1)}_4,{\bf a}^{(2)}_4)$.  One has
$v = (v_j^{(i)})$, $i=1,2$, $j=1,\dots,4$, with $v_4^{(1)} = -1$, $v_4^{(2)} = -1$, $v_j^{(i)}=0$ for all 
other $i,j$, and 
\[ \hat{\beta} = \sum_{i=1}^2\sum_{j=1}^4 v_j^{(i)}\hat{\bf a}_j^{(i)} = (-1,\dots,-1)\in{\mathbb R}^8. \]
Proposition~5.4 implies that $\Phi_v(\lambda)$ has integer coefficients.  Note that the lattice of 
relations $L$ has the following form: $l=(l_j^{(i)})\in L$ if and only if $l$ is an integer multiple 
of the element having $l_j^{(i)} = 1$ for $j=1,2,3$ and $i=1,2$, $l_4^{(1)} = l_4^{(2)} = -3$.  The set 
$L_v$ consists of all nonnegative integer multiples of this element.  It follows that 
\begin{equation}
\Phi_v(\lambda) = \big(\lambda_4^{(1)}\lambda_4^{(2)}\big)^{-1}\sum_{l=0}^\infty \frac{(3l)!^2}{l!^6}
\bigg(\frac{\lambda_1^{(1)}\lambda_2^{(1)}\lambda_3^{(1)} \lambda_1^{(2)}\lambda_2^{(2)}\lambda_3^{(2)}}
{\big(\lambda_4^{(1)}\lambda_4^{(2)}\big)^3}\bigg)^l.
\end{equation}

If we write the system in one-parameter form as
\begin{align*}
x_1^3+x_2^3 +x_3^3 -3\lambda x_4x_5x_6 &= 0 \\
x_4^3+x_5^3 +x_6^3 -3\lambda x_1x_2x_3 &= 0,
\end{align*} 
then our solution specializes to 
\[ \Phi_v(1,1,1,-3\lambda,1,1,1,-3\lambda) = (3\lambda)^{-2} \sum_{l=0}^{\infty} \frac{(3l)!^2}{l!^6}
(3\lambda)^{-6l}. \]
(see Libgober-Teitelbaum\cite{LT}).

For all other sequences $({\bf a}^{(1)}_{j_1},{\bf a}^{(2)}_{j_2})$ in this example one has $L_v = 
\{{\bf 0}\}$, so the corresponding series $\Phi_v(\lambda)$ consists of just a single monomial.

\section{Parameters mod ${\mathbb Z}^n$}

In this section we consider the system (1.1), (1.2) when the parameter varies in a shifted lattice 
$\beta+{\mathbb Z}^n$.  Put
\[ RA = \bigg\{ \sum_{i=1}^N r_i{\bf a}_i\;\bigg|\; r=(r_1,\dots,r_N)\in R\bigg\}. \]
Since $\beta+{\mathbb Z}^n$ is discrete and $RA$ is bounded, their intersection is a finite set. Put 
\[ \Gamma = (-\sigma_{-\beta}^\circ)\cap(\beta+{\mathbb Z}^n)\cap RA. \]  
We assume $\beta$ to be chosen so that this intersection is nonempty.   Let $B={\rm card}(\Gamma)$.
Fix a positive integer $a$ such that $(p^a-1)\beta\in{\mathbb Z}^n$.  Then $(p^a-1)\gamma\in
{\mathbb Z}^n$ for all $\gamma\in\Gamma$.  Recall that in Section 2 we defined for $b\in{\mathbb N}_+$
\[ R_{\gamma,b} = \{r\in R_\gamma\mid (1-p^b)r\in{\mathbb N}^N\}.  \]
\begin{theorem}
There exist $\gamma\in\Gamma$, a positive integer $b\leq B$, and $v\in R_{\gamma,ab}$ such that
\[ w(v) = w(R_{\gamma,ab}) = w(R_{\gamma})= \min\{ w(R_\delta)\mid \delta\in\Gamma \}. \]
In particular, the series $\Phi_v(\lambda)$ has $p$-integral coefficients.
\end{theorem}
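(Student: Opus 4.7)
The plan is to follow Blache's orbit-and-pigeonhole technique~\cite{B}, applied to a block-shift operator on $p$-adic expansions.

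First, I would introduce the block-shift $\tau$. For $r\in R$ with $(1-p^{ab})r\in(\mathbb{Z}_{\geq 0})^N$, define $\tau(r)$ by cyclically permuting by one the $b$ blocks of $a$ consecutive $p$-adic digits in the expansion of $(1-p^{ab})r$; equivalently, $\tau$ is the $a$-fold iterate of the single-digit shift of Section~3. Three key properties hold: $w(\tau r)=w(r)$; the map $\tau$ sends the finite set $\bigsqcup_{\delta\in\Gamma}R_{\delta,ab}$ to itself (because $\sum(\tau r)_k\mathbf{a}_k\equiv p^{-a}\sum r_k\mathbf{a}_k\equiv\sum r_k\mathbf{a}_k\pmod{\mathbb{Z}^n}$, using $(p^a-1)\beta\in\mathbb{Z}^n$, while Lemma~3.2 keeps the result in $-\sigma_{-\beta}^\circ\cap RA$); and $\tau^b$ is the identity on $R_{\delta,ab}$.

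Second, I would state the \emph{reduction step}: for every $v'\in R_{\delta',ab'}$ with $\delta'\in\Gamma$ and $b'>B$, there exists $v^*\in R_{\delta^*,ab^*}$ with $\delta^*\in\Gamma$, $b^*<b'$, and $w(v^*)\leq w(v')$. Granted this, iteration reduces $b'$ until it is at most $B$, yielding some $v\in R_{\gamma,ab}$ with $\gamma\in\Gamma$, $b\leq B$, and $w(v)\leq w(v')$. Setting $W:=\min\{w(R_\delta)\mid\delta\in\Gamma\}$, choose a sequence of near-minimizers $v'_n$ with $w(v'_n)\to W$ and reduce each to an element of the finite set $\mathcal{R}:=\bigsqcup_{\gamma\in\Gamma,\,b\leq B}R_{\gamma,ab}$; the minimum of $w$ on the finite set $\mathcal{R}$ is then attained and equals $W$. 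This produces the required $(\gamma,b,v)$ with $w(v)=w(R_{\gamma,ab})=w(R_\gamma)=W$.

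Third, I would prove the reduction step. Apply $\tau$ to the orbit of $v'$: the associated sequence $\delta_0=\delta',\delta_1,\ldots,\delta_{b'-1}\in\Gamma$ has $b'>B$ terms, so by pigeonhole some $\delta_i=\delta_j$ with $0\leq i<j\leq B$. Let $v^*$ be the block-periodic element of period $a(j-i)$ whose one period is the block string $(B_i,B_{i+1},\ldots,B_{j-1})$ of $v'$, where $B_m$ denotes the $m$-th block. The \emph{main obstacle} is to check simultaneously: (a)~$\sum v^*_k\mathbf{a}_k=\delta_i\in\Gamma$ (that is, it lies in $-\sigma_{-\beta}^\circ\cap(\beta+\mathbb{Z}^n)\cap RA$); and (b)~$w(v^*)\leq w(v')$. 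For (a), translate $\delta_i=\delta_j$ into a relation in $\mathbb{Z}[q]/(q^{b'}-1)$ with $q=p^a$, which forces the block-periodic $\sum$ to coincide with $\delta_i$; the inclusion $\mathrm{supp}(v^*)\subseteq\mathrm{supp}(v')$ combined with Lemma~3.2 applied to $v'$ then ensures $-\sum v^*_k\mathbf{a}_k$ still lies in the open face $\sigma_{-\beta}^\circ$. For (b), the $b'$ candidates obtained by varying the starting block cyclically through $\{0,\ldots,b'-1\}$ have mean weight $w(v')$, so some choice achieves $w(v^*)\leq w(v')$; a refined pigeonhole aligns this weight condition with the $\delta$-coincidence. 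This produces $v^*\in R_{\delta_i,a(j-i)}$ with $b^*=j-i\leq B<b'$, completing the reduction and hence Theorem~6.1.
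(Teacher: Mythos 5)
Your overall strategy --- the block-shift $\tau$, the pigeonhole on the finitely many classes $\delta_k\in\Gamma$ attached to the orbit of $v'$ under $\tau$, and the extraction of a shorter block-periodic element --- is exactly the route the paper takes in Lemma 6.2, which is where the whole content of Theorem 6.1 lives. Your point (a) is sound: the membership $\sum_i(\tau r)_i{\bf a}_i\in\Gamma$ follows from Lemma 3.2 together with $(p^a-1)\beta\in{\mathbb Z}^n$, and the telescoping computation with $q=p^a$ does show that the block string $(B_i,\dots,B_{j-1})$ defines an element of $R_{\delta_i,a(j-i)}$. Your second paragraph (reduction to the finite set $\bigsqcup_{\gamma\in\Gamma,\,b\leq B}R_{\gamma,ab}$, attainment of the minimum there, and the resulting chain of equalities) is also the paper's logic.

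The genuine gap is in step (b), the weight comparison. You keep only the single window $(B_i,\dots,B_{j-1})$ and argue that, since the $b'$ cyclic windows of that length have mean weight $w(v')$, some window has weight $\leq w(v')$. That averaging statement is true, but the low-weight window it produces need not be one whose two endpoints carry the same $\delta$, and conversely the window with the $\delta$-coincidence need not be a low-weight one; the ``refined pigeonhole'' that is supposed to align the two conditions is precisely the missing idea, and for windows of one fixed length it can simply fail (take $b'=3$, $\delta_0=\delta_1\neq\delta_2$, with all the weight concentrated in block $B_0$). The paper closes this by using \emph{both} complementary arcs cut out by the coincidence $\delta_i=\delta_j$: after shifting so that $i$ becomes $0$ and setting $t=j-i$, the blocks $0,\dots,t-1$ and $t,\dots,b'-1$ define elements $u\in R_{\epsilon,at}$ and $v\in R_{\epsilon,a(b'-t)}$ with the \emph{same} $\epsilon=\delta_i$, and the exact identity $w(s)=(t/b')\,w(u)+(1-t/b')\,w(v)$ forces $\min\{w(u),w(v)\}\leq w(s)$ with no further pigeonhole. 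Note that this also invalidates your claim of a one-step reduction to $b^*=j-i\leq B$: the light arc may be the long one, of length $b'-t$, so each step only decreases $b'$ strictly and one must induct down to $b\leq B$, which is exactly how Lemma 6.2 is stated and proved. With that repair, the rest of your argument goes through.
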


Since $R_{\gamma,ab}$ is a finite set, the existence of $v\in R_{\gamma,ab}$ satisfying the first 
equality of Theorem~6.1 is trivial.  The proof of the latter two equalities of Theorem~6.1 is 
based on the following lemma.
\begin{lemma}
For all $\gamma\in\Gamma$ and every positive integer $b$
\[ w(R_{\gamma,ab})\geq \min\{w(R_{\delta,ac})\mid \delta\in\Gamma\text{ and }1\leq c\leq B\} . \]
\end{lemma}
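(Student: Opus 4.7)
The plan is to prove the lemma by induction on $b$. If $b \leq B$, take $\delta = \gamma$ and $c = b$; the inequality is then trivial.

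For the inductive step, assume $b > B$ and the lemma holds for all positive integers smaller than $b$. Let $r \in R_{\gamma, ab}$ attain the minimum $w(r) = w(R_{\gamma, ab})$. Set $q = p^a$, so $\mathbf{s} := (1 - q^b) r \in \mathbb{Z}_{\geq 0}^N$, and expand each coordinate in base $q$ as $s_l = \sum_{d=0}^{b-1} s_{l, d}\, q^d$ with $0 \leq s_{l, d} \leq q - 1$.  First I would observe that for every $k \in \{0, 1, \dots, b-1\}$ the shifted vector $r^{(ak)}$ lies in $R_{\gamma^{(ak)}, ab}$ with the same weight as $r$, and that the congruence $\gamma^{(ak)} \equiv p^{ak}\gamma \equiv \gamma \pmod{\mathbb{Z}^n}$ (using $(p^a - 1)\beta \in \mathbb{Z}^n$) forces $\gamma^{(ak)} \in \Gamma$ for every $k$.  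Since $b > B = |\Gamma|$, the pigeonhole principle produces indices $0 \leq i < j \leq b - 1$ with $\gamma^{(ai)} = \gamma^{(aj)}$.

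Next I would perform a cut-and-paste on the base-$q$ digits.  Set $b' = b - (j - i)$, and define $r'$ to be the element of $R$ at base-$q$ level $b'$ whose digits are the kept block $(s_{l, 0}, \dots, s_{l, i-1}, s_{l, j}, \dots, s_{l, b-1})$, and $r''$ at base-$q$ level $j - i$ whose digits are the removed block $(s_{l, i}, \dots, s_{l, j-1})$.  The main technical step---which I expect to be the principal obstacle---is an algebraic identity showing
\[
\sum_l r'_l \mathbf{a}_l = \gamma \quad\text{and}\quad \sum_l r''_l \mathbf{a}_l = \gamma^{(ai)}.
\]
I would derive this by expanding both sides as geometric sums in $q$ and invoking the equality $\gamma^{(ai)} = \gamma^{(aj)}$ to eliminate the middle polynomial $\sum_{d=i}^{j-1}\bigl(\sum_l s_{l, d} \mathbf{a}_l\bigr) q^d$.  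Since $\gamma$ and $\gamma^{(ai)}$ both lie in $\Gamma$, this yields $r' \in R_{\gamma, ab'}$ and $r'' \in R_{\gamma^{(ai)}, a(j-i)}$.

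Finally, because the base-$p$ digits of $\mathbf{s}$ partition into those of $\mathbf{s}'$ and $\mathbf{s}''$, one has $w_p(\mathbf{s}) = w_p(\mathbf{s}') + w_p(\mathbf{s}'')$, equivalently
\[
b \cdot w(r) = b' \cdot w(r') + (j - i) \cdot w(r''),
\]
so $\min(w(r'), w(r'')) \leq w(r)$.  Accordingly, either $w(R_{\gamma, ab'}) \leq w(r)$ or $w(R_{\gamma^{(ai)}, a(j-i)}) \leq w(r)$; since $b'$ and $j - i$ are both strictly less than $b$, the inductive hypothesis applied in the relevant case furnishes $w(r) \geq \min\{w(R_{\delta, ac}) : \delta \in \Gamma,\ 1 \leq c \leq B\}$, completing the induction.
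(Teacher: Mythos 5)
Your proposal is correct and follows essentially the same route as the paper: pigeonhole the classes $\gamma^{(ak)}\in\Gamma$, split the base-$q$ digit string at the two repeated indices, and use the convex-combination identity for $w$ to close the induction (the paper first rotates the digits so that both pieces land in the same $R_{\epsilon,\cdot}$, whereas you cut out the middle block directly, but this difference is only cosmetic). The one point you gloss over is that $\gamma^{(ak)}\in\Gamma$ requires, besides the congruence modulo ${\mathbb Z}^n$ and membership in $RA$, the condition $\gamma^{(ak)}\in-\sigma_{-\beta}^\circ$, which is exactly what Lemma 3.2 supplies and is needed for the pigeonhole over the $B$ elements of $\Gamma$.
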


We first observe that Lemma 6.2 implies Theorem 6.1 and then prove Lemma~6.2.
Since $R_{\gamma} = \bigcup_{b=1}^\infty R_{\gamma,ab}$, Lemma 6.2 implies that 
\[ w(R_{\gamma})\geq \min\{w(R_{\delta,ac})\mid \delta\in\Gamma\text{ and }1\leq c\leq B\} \]
for all $\gamma\in\Gamma$.  On the other hand, $w(R_{\delta,ac})\geq w(R_{\delta})$ for all 
$\delta\in\Gamma$ and all $c$, hence
\[ w(R_\gamma)\geq \min\{w(R_{\delta,ac})\mid \delta\in\Gamma\text{ and }1\leq c\leq B\} \geq 
\min\{w(R_\delta)\mid \delta\in\Gamma\} \]
for all $\gamma\in\Gamma$.
We therefore have
\[ \min\{w(R_{\delta,ac})\mid \delta\in\Gamma\text{ and }1\leq c\leq B\} = \min\{w(R_\delta)\mid 
\delta\in\Gamma\}. \]
It follows that if we choose $\gamma\in\Gamma$ and $b\leq B$ such that
\[ w(R_{\gamma,ab}) = \min\{w(R_{\delta,ac})\mid \delta\in\Gamma\text{ and }1\leq c\leq B\}, \]
then
\begin{equation}
w(R_{\gamma,ab}) = w(R_{\gamma}) = \min\{w(R_{\delta})\mid \delta\in\Gamma\},
\end{equation}
which proves Theorem 6.1.

\begin{proof}[Proof of Lemma $6.2$]
Fix $\gamma\in\Gamma$ and a positive integer $b$.  It suffices by induction to show that if $b>B$, 
then there exist $\delta\in\Gamma$ and $0<t<b$ such that
\begin{equation}
w(R_{\gamma,ab})\geq \min\{w(R_{\delta,at}),w(R_{\delta,a(b-t)})\}.
\end{equation}
To prove (6.4) it suffices to show that, given $r\in R_{\gamma,ab}$, there exist $\delta\in\Gamma$, 
$u\in R_{\delta,at}$, and
$v\in R_{\delta,a(b-t)}$ such that 
\begin{equation}
w(r)\geq\min\{w(u),w(v)\}.
\end{equation}  
Recall that in Section 3 we defined maps $r\mapsto r^{(k)}$ of $R$ to $R$. 
Let $\delta = \sum_{i=1}^N r^{(a)}_i{\bf a}_i$, so that $r^{(a)}\in R_{\delta,ab}$.  By definition $r^{(a)}
\in R$, so $\delta\in RA$.  Since $(p^a-1)\beta\in{\mathbb Z}^n$, we have $\delta\in\beta+
{\mathbb Z}^n$.  Finally, by Lemma~3.3, we have $\delta\in -\sigma_{-\beta}^\circ$, so $\delta\in\Gamma$.

Consider $r^{(ak)}$ for $k=0,1,\dots,b-1$.  By what we have just proved, we have
$r^{(ak)}\in R_{\delta_k,ab}$ for some $\delta_k\in\Gamma$.  The set $\Gamma$ has cardinality~$B$, so 
$b>B$ implies there exist $0\leq k_1<k_2\leq b-1$ such that 
\begin{equation}
\sum_{i=1}^N r^{(ak_1)}_i{\bf a}_i = \sum_{i=1}^N r^{(ak_2)}_i{\bf a}_i\in \Gamma.
\end{equation}
To simplify notation we set $s=r^{(ak_1)}$, $t=k_2-k_1$, and $\epsilon = \sum_{i=1}^N s_i{\bf a}_i$, so 
that (6.6) implies
\begin{equation}
\sum_{i=1}^N s_i{\bf a}_i = \sum_{i=1}^N s^{(at)}_i{\bf a}_i=\epsilon.
\end{equation}
Note that $w(r^{(ak)})=w(r)$ for all $k$, so to prove (6.5) it suffices to show there exist 
$u\in R_{\epsilon,at}$ and $v\in R_{\epsilon,a(b-t)}$ such that
\begin{equation}
w(s)\geq\min\{w(u),w(v)\}.
\end{equation}

Set $q=p^a$ and write
\[ (1-q^b)s_i = s_{i0} + s_{i1}q+\cdots+s_{i,b-1}q^{b-1} \]
with $0\leq s_{ik}\leq q-1$ for $k=0,\dots,b-1$.  Then $s_i^{(ak)}$ satisfies
\[ (1-q^b)s^{(ak)}_i = s_{ik} + s_{i,k+1}q+\cdots+s_{i,b-1}q^{b-1-k} + s_{i0}q^{b-k} +\cdots+ s_{i,k-1}q^{b-1} \]
for $k=1,\dots,b$.  Define
\[ \epsilon^{(ak)} = \sum_{i=1}^N s_i^{(ak)}{\bf a}_i. \]
A calculation shows that
\begin{equation}
\epsilon^{(ak)}-q\epsilon^{(a(k+1))} = \sum_{i=1}^N s_{ik}{\bf a}_i.
\end{equation}
It follows from (6.7) that $\epsilon^{(at)} = \epsilon$, hence by (6.9)
\begin{align*}
\sum_{i=1}^N (s_{i0} + s_{i1}q+\cdots+s_{i,t-1}q^{t-1}){\bf a}_i &= \sum_{k=0}^{t-1} q^k(\epsilon^{(ak)} - 
q\epsilon^{(a(k+1))}) \\
 &= \epsilon-q^t\epsilon^{(at)} = (1-q^t)\epsilon.
\end{align*}
Similarly we have
\begin{align*}
\sum_{i=1}^N (s_{it} + s_{i,t+1}q+\cdots+s_{i,b-1}q^{b-1-t}){\bf a}_i &= \sum_{k=t}^{b-1} 
q^{k-t}(\epsilon^{(ak)} - q\epsilon^{(a(k+1))}) \\
 &= \epsilon^{(at)}-q^{b-t}\epsilon^{(ab)} = (1-q^{b-t})\epsilon.
\end{align*}

Define for $i=1,\dots,N$
\begin{align*}
u_i &= (1-q^t)^{-1}\sum_{k=0}^{t-1} s_{ik}q^{k}, \\
v_i &= (1-q^{b-t})^{-1}\sum_{k=t}^{b-1} s_{ik}q^{k-t}.
\end{align*}
The above equations give
\[ \sum_{i=1}^N u_i{\bf a}_i = \sum_{i=1}^N v_i{\bf a}_i = \epsilon. \]
It follows that $u=(u_1,\dots,u_N)\in R_{\epsilon,at}$ and $v=(v_1,\dots,v_N)\in R_{\epsilon,a(b-t)}$.  
From the definitions of $u$ and $v$ we have
\[ w_p((1-q^b)s) = w_p((1-q^{t})u) + w_p((1-q^{b-t})v), \]
hence
\[ \frac{w_p((1-q^b)s)}{b} = \bigg(\frac{t}{b}\bigg)\frac{w_p((1-q^{t})u)}{t} + \bigg(1-\frac{t}{b}
\bigg)\frac{w_p((1-q^{b-t})v)}{b-t}. \]  
This gives $w(s) = (t/b)w(u) + (1-t/b)w(v)$, which implies (6.8).
\end{proof}

\section{Exponential sums}

Let ${\mathbb F}_q$ be the finite field of $q=p^a$ elements and let $\bar{\lambda}_1,\dots,\bar{\lambda}_N\in{\mathbb F}_q$.  Define
\[ f_{\bar{\lambda}}(x) = \sum_{j=1}^N \bar{\lambda}_j x^{{\bf a}_j}\in{\mathbb F}_q[x_1^{\pm 1},\dots,x_n^{\pm 1}]. \]
Let $\Psi:{\mathbb F}_p\to{\mathbb Q}_p(\zeta_p)^\times$ be the nontrivial additive character satisfying $\Psi(1) \equiv 1+\pi \pmod{\pi^2}$ and let $\omega:{\mathbb F}_q^\times\to{\mathbb Q}_p(\zeta_{q-1})^\times$ be the Teichm\"uller multiplicative character.  Let ${\bf e} = (e_1,\dots,e_n)\in{\mathbb Z}^n$.  We consider the exponential sum 
\begin{multline}
S(f_{\bar{\lambda}},{\bf e}) = \\ 
\sum_{x=(x_1,\dots,x_n)\in({\mathbb F}_q^\times)^n} \omega(x_1)^{-e_1}\cdots \omega(x_n)^{-e_n}\Psi\big({\rm Tr}_{{\mathbb F}_q/{\mathbb F}_p}(f_{\bar{\lambda}}(x))\big)\in{\mathbb Q}_p(\zeta_p,\zeta_{q-1}).
\end{multline}
(Of course, if ${\bf e}\equiv{\bf e'}\pmod{(q-1){\mathbb Z}^n}$, then $S(f_{\bar{\lambda}},{\bf e}) = S(f_{\bar{\lambda}},{\bf e'})$.) 

We put $\beta = -{\bf e}/(q-1)$, $M={\bf e}+(q-1){\mathbb Z}^n$, and $\Gamma=(\beta + {\mathbb Z}^n)\cap RA$, a finite set.  For $b\in{\mathbb N}_+$ define $R_b\subseteq R$ by
\[ R_b = \{r\in R\mid (1-p^b)r\in{\mathbb N}^N\}. \]
Clearly $R = \bigcup_{b\in{\mathbb N}_+} R_b$.  We also define $\Gamma_b\subseteq\Gamma$ by $\Gamma_b = (\beta+{\mathbb Z}^n)\cap R_bA$.  In \cite[Section 5]{AS3}, we associated to $M$ a set $U_M\subseteq{\mathbb N}^N$, namely,
\[ U_M = \bigg\{u=(u_1,\dots,u_N)\in\{0,1,\dots,q-1\}^N \mid \sum_{j=1}^N u_j{\bf a}_j\in M\bigg\}. \]
A short calculation shows that one has $u=(u_1,\dots,u_N)\in U_M$ if and only if $(1-q)^{-1}u\in R_a$ and
\[ \sum_{j=1}^N \frac{u_j}{1-q}{\bf a}_j \in \beta + {\mathbb Z}^n. \]
Since $q=p^a$, it follows that
\begin{equation}U_M = \bigcup_{\gamma\in\Gamma_a} (1-p^a)R_{\gamma,a}\quad\text{(a disjoint union),} 
\end{equation}
where 
\[ (1-p^a)R_{\gamma,a} = \{ (1-p^a)r\mid r\in R_{\gamma,a}\}. \]

In \cite[Section 5]{AS3} we defined the {\it $p$-weight of $M$\/}, $w_p(M)$, to be
\[ w_p(M) = \min\{ w_p(u)\mid u\in U_M\}. \]
It follows from (7.2) that $w_p(M) = \min_{\gamma\in\Gamma_a} \{aw(R_{\gamma,a}\}$.  Put
\[ \Gamma_a' = \{\gamma\in\Gamma_a\mid aw(R_{\gamma,a}) = w_p(M)\}. \]
By \cite[Equation (6.3)]{AS3} we then have
\begin{multline}
S(f_{\bar{\lambda}},{\bf e})\equiv \\
(-1)^n\bigg(\sum_{\gamma\in\Gamma'_a} \sum_{\substack{r\in R_{\gamma,a}\\ w(r) = w(R_{\gamma,a})}} \frac{\prod_{j=1}^N \omega(\bar{\lambda_j})^{(1-p^a)r_j}}{\big((1-p^a)r\big)!!} \bigg)\pi^{w_p(M)} \pmod{\pi^{w_p(M)+1}}.
\end{multline}

For $\gamma\in\Gamma'_a$, let $v(\gamma)\in R_{\gamma,a}$ be an element such that $w(v(\gamma)) = w(R_{\gamma,a})$.  Then by (2.17)
\begin{multline}
S(f_{\bar{\lambda}},{\bf e})\equiv (-1)^n\pi^{w_p(M)} \\ 
\cdot\sum_{\gamma\in\Gamma'_a} \frac{\bar{\lambda}^{-p^av(\gamma)}}{\big((1-p^a)v(\gamma)\big)!!}\Phi_{v(\gamma),a}\big(\omega(\bar{\lambda}_1),\dots,\omega(\bar{\lambda}_N)\big) \pmod{\pi^{w_p(M)+1}}.
\end{multline}
Equation (7.4) shows that the Hasse invariant of the exponential sum $S(f_{\hat{\lambda}},{\bf e})$ is a sum of mod $p$ solutions of $p$-adically normalized $A$-hypergeometric systems \cite[Equations~(2.2) and~(2.3)]{AS3}.  Theorem~6.6 of \cite{AS3} gives a more detailed version of this result from a different point of view.  

Similar results hold for the corresponding exponential sums over extension fields of ${\mathbb F}_q$.  For $b\in{\mathbb N}_+$ set
\begin{multline}
S_b(f_{\bar{\lambda}},{\bf e}) = \\ 
\sum_{x=(x_1,\dots,x_n)\in({\mathbb F}_{q^b}^\times)^n} \bigg(\prod_{i=1}^n \omega\big({\rm Norm}_{{\mathbb F}_{q^b}/{\mathbb F}_q}(x_i)\big)^{-e_i}\bigg)\Psi\big({\rm Tr}_{{\mathbb F}_{q^b}/{\mathbb F}_p}(f_{\bar{\lambda}}(x))\big),
\end{multline}
an element of ${\mathbb Q}_p(\zeta_p,\zeta_{q-1})$.
Put $W_{ab} = \min_{\gamma\in\Gamma_{ab}}\{abw(R_{\gamma,ab})\}$ and
\[ \Gamma'_{ab} = \{\gamma\in\Gamma_{ab}\mid abw(R_{\gamma,ab}) = W_{ab}\}. \]
Then as in (7.4) one has
\begin{multline}
S_b(f_{\bar{\lambda}},{\bf e})\equiv (-1)^n\pi^{W_{ab}} \\
\cdot\sum_{\gamma\in\Gamma'_{ab}} \frac{\bar{\lambda}^{-p^{ab}v(\gamma)}}{\big((1-p^{ab})v(\gamma)\big)!!}\Phi_{v(\gamma),ab}\big(\omega(\bar{\lambda}_1),\dots,\omega(\bar{\lambda}_N)\big) \pmod{\pi^{W_{ab}+1}}
\end{multline}
where $v(\gamma)\in R_{\gamma,ab}$ is any element such that $w(v(\gamma)) = w(R_{\gamma,ab})$.

By Theorem 6.1, there exists $b$ such that at least one of the $\Phi_{v(\gamma),ab}$ on the right-hand side of (7.6) is the truncation of a $p$-integral $A$-hypergeometric series.  So for any choice of $A$ and ${\bf e}$ there is an extension ${\mathbb F}_{q^b}$ of ${\mathbb F}_q$ such that the study of the given exponential sum over ${\mathbb F}_{q^b}$ involves $p$-integral $A$-hypergeometric series.  These are often starting points for the analysis of the associated $L$-function.  Specifically, the $p$-integral $A$-hypergeometric series that arise in this manner seem to be related to the reciprocal roots of minimal $p$-divisibility of the $L$-function.  It would be nice to make this relationship precise.

\end{document}